\theoremstyle{plain}
\newtheorem{theorem}[subsection]{Theorem}
\newtheorem{lemma}[subsection]{Lemma}
\newtheorem{proposition}[subsection]{Proposition}
\newtheorem{remark}[subsection]{Remark}
\newtheorem{example}[subsection]{Example}
\theoremstyle{definition}
\newtheorem{definition}[subsection]{Definition}
\newcommand{\R}{\mathbb{R}} 
\newcommand{\T}{\mathbb{T}} 
\newcommand{\Z}{\mathbb{Z}} 
\newcommand{\go}{\ensuremath{\Gamma^{0}}}
\newcommand{\HH}{\mathcal{H}}
\newcommand{\LL}{\mathcal{L}}
\newcommand{\TT}{\mathcal{T}}
\newcommand{\RR}{\mathcal{R}}
\newcommand{\Ind}{\operatorname{Ind}}
\newcommand{\cs}{$C^{*}$}
\newcommand{\ind}{\operatorname{ind}}
\newcommand{\Prim}{\operatorname{Prim}}
\newcommand{\Aut}{\operatorname{Aut}}
\newcommand{\Ad}{\operatorname{Ad}}
\newcommand{\sub}{\subset}
\newcommand{\hsg}{{\lambda}} 
\newcommand{\hsi}{{\nu}} 
\newcommand{\mfb}{\mathfrak{b}} 
\newcommand{\mfU}{\mathcal{U}} 
\begin{document}

\title{Obstructions to lifting cocycles on groupoids and the associated $C^*$-algebras}

\author{Marius Ionescu}

\address{Department of Mathematics,  United States Naval Academy, 
  Annapolis, MD, USA}
 
\email{ionescu@usna.edu}
\thanks{The work of the authors was partially supported by their
  individual  grants from
  the Simons Foundation (\#209277 to Marius Ionescu and \# to Alex Kumjian).}

\author{Alex Kumjian}

\address{Department of Mathematics, University of Nevada, Reno, NV, USA}
\email{alex@unr.edu}

\thanks{This work was supported by Simons Foundation Collaboration grants {\#}209277 (MI) and  {\#}353626 (AK)}
\begin{abstract}
    Given a short exact sequence of locally compact abelian groups $0 \to A \to B \to C \to 0$
     and a continuous $C$-valued $1$-cocycle $\phi$ on a
    locally compact Hausdorff groupoid $\Gamma$ we construct a twist
    of $\Gamma$ by $A$ that is trivial if and only if $\phi$
    lifts. The cocycle determines a strongly continuous action of
    $\widehat{C}$ into $\Aut C^*(\Gamma)$ and we prove that the
    $C^*$-algebra of the twist is isomorphic to the induced algebra
    of this action if $\Gamma$ is amenable. 
    We apply our results to a groupoid determined by a
    locally finite cover of a space $X$ and a cocycle provided by a
    \v{C}ech 1-cocycle with coefficients in the sheaf of germs of
    continuous $\mathbb{T}$-valued functions. We prove that the
    $C^*$-algebra of the resulting twist is continuous trace and we compute
    its Dixmier-Douady invariant.
  \end{abstract}

  \maketitle

\section{Introduction}
\label{sec:introduction}
It is a well-known fact that given a $1$-cocycle $\phi : \Gamma \to \T$ on 
an \'{e}tale groupoid $\Gamma$, there is an automorphism $\alpha$ of $C^*(\Gamma)$
such that $\alpha(f)(\gamma) = \phi(\gamma)f(\gamma)$ for all $f \in C_c(\Gamma)$
(see \cite[Proposition II.5.1]{ren:groupoid}).
If $\phi$ can be lifted to an $\R$-valued $1$-cocycle $\tilde{\phi}$, there is a strongly continuous 
$1$-parameter group of automorphisms $\tilde{\alpha}: \R \to \Aut C^*(\Gamma)$ such that
$\alpha = \tilde{\alpha}_1$.  
But in general there is a cohomological obstruction to lifting $\phi$.   
There is a central groupoid extension $\Sigma_\phi$ of $\Gamma$ by $\Z$, called a twist, 
which is trivial precisely when $\phi$ can be lifted.  
Our  goal in this paper is to describe the structure of $C^*(\Sigma_\phi)$ in terms of
$C^*(\Gamma)$ and the automorphism $\alpha$.  

Our results will be proven in a somewhat more general form.  
Given a locally compact Hausdorff groupoid $\Gamma$, a short exact
sequence of locally compact abelian groups 
\[
  0\rightarrow A\xrightarrow{i} B\xrightarrow{p} C\rightarrow 0
  \]
and a continuous $1$-cocycle $\phi : \Gamma \to C$ we construct a twist $\Sigma_\phi$ of $\Gamma$ by $A$
which is trivial if and only if the cocycle lifts.  
As above the cocycle $\phi$ determines  a strongly continuous action
$\alpha^{\phi} :\widehat{C}\to \Aut C^*(\Gamma)$.  
We prove that $C^*(\Sigma_\phi)$ is isomorphic to the induced algebra for this action
if $\Gamma$ is amenable 
(see Theorem~\ref{thm:main}). 
So in particular, there is an action $\gamma: \widehat{B}\to \Aut C^*(\Sigma_\phi)$
such that $C^*(\Sigma_\phi) \rtimes_\gamma \widehat{B}$ is strong Morita equivalent to
$C^*(\Gamma) \rtimes_{\alpha^{\phi}}\widehat{C}$.

In example \ref{ex:cech} we consider a groupoid $\Gamma$ determined by a locally finite open cover 
$\mfU := \{ U_i \}_{i \in I}$ of a space $X$ (see \cite{RaeTay85}) and
take $A := \Z$, $B := \R$ and $C := \T$. 
The $1$-cocycle $\phi$ arises from a \v{C}ech 1-cocycle $\lambda = \{ \lambda_{ij} \}_{i, j \in I}$
with coefficients in $\TT$, the sheaf of germs of continuous $\T$-valued functions.
If $\lambda$ satisfies a certain liftability hypothesis, the corresponding twist $\Sigma_\phi$ 
is then determined by a \v{C}ech 2-cocycle $\lambda^\star$ with coefficients in the constant sheaf 
with fiber $\Z$ (we denote the sheaf by $\Z$ for the sake of simplicity) 
that measures the obstruction to lifting $\lambda$ to a \v{C}ech 1-cocycle with coefficients 
in the sheaf of germs of continuous $\R$-valued functions.
Finally we show in example \ref{ex:cech2} that $C^*(\Sigma_\phi)$ is a continuous trace algebra 
and compute its Dixmier-Douady invariant using the cohomology class $[\lambda]$
which is identified with $[\lambda^\star]$ via the standard isomorphism of \v{C}ech 
cohomology groups $\check{H}^1(X, \TT) \cong \check{H}^2(X, \Z)$. This
example was a key motivation for this project and it was suggested by
results of \cite{RaeRos88} (see also \cite{wil:book07}).
 
In this note we assume that all topological spaces and groupoids are second countable 
locally compact and Hausdorff and all groups are second countable abelian locally compact 
and Hausdorff.  Hence all spaces are paracompact.
Moreover we assume that all groupoids are endowed with a Haar
system.

The second author would like to thank the first author and his colleagues at the 
Naval Academy for their hospitality and support during a recent visit.

\section{Preliminaries}
\label{sec:preliminaries}

In this section we present first a characterization of the induced
algebra from an action of a closed subgroup of a locally compact abelian group on a $C^*$-algebra. 
Our characterization is essential in our proof of the main result, Theorem
\ref{thm:main}. For a related characterization of the induced algebra
see \cite{Ech90}. Second, we provide conditions that guarantee that
the natural map $j$ as defined in equation \eqref{eq:embedding} from the
$C^*$-algebra of a subgroupoid into the 
multiplier algebra of the $C^*$-algebra of the groupoid is
faithful. Our results generalize well known facts about group
$C^*$-algebras. 

\subsection{A characterization of the induced algebra}
\label{subsec:char}
Let $H$ be a closed subgroup of a locally compact abelian group $G$, let $D$ be
a \cs-algebra 
and let $\alpha : H \to \Aut D$ be a strongly continuous action.  
If $f: G \to D$ is a continuous function such that $f(x-h) = \alpha_h(f(x))$ 
for all $h \in H, x \in G$ then $x+H \mapsto \| f(x) \|$ yields a well-defined 
continuous function.
We define the \emph{induced \cs-algebra} (see \cite[\S3.6]{wil:book07}) by
\begin{gather*}
\ind_H^G(D, \alpha) := \{ f: G \to D \text{ continuous }\mid f(x-h) = \alpha_h(f(x)), h \in H, x \in G \\
\text{ and } x + H \mapsto \| f(x) \| \in C_0(H/G) \}.
\end{gather*}
There is a canonical translation action $\beta: G \to \Aut(\ind_H^G(D, \alpha))$ 
given by $\beta_g(f)(x) := f(x-g)$.  
By \cite[Lemma 3.1]{RaeRos88} $D \rtimes_\alpha H$ is strong Morita equivalent to
$\ind_H^G(D, \alpha) \rtimes_\beta G$.

Point evaluation at $0$ yields a surjective homomorphism 
$\pi : \ind_H^G(D, \alpha) \to D$ given by $\pi(f) = f(0)$. 
There is a natural translation action 
\[
  \tau : G \to  \Aut (C_0(G/H))
\]
(where $\tau_g(k)(x + H) = k(x - g + H)$) and a $G$-equivariant homomorphism 
\[
  j: C_0(G/H) \to M(\ind_H^G(D, \alpha))
\]
determined by pointwise multiplication.
It is easy to check that these maps satisfy the following conditions 
for all $f \in \ind_H^G(A, \alpha)$:
\begin{itemize}
\item[i.] For all $k \in C_0(G/H)$,
$\pi(j(k)f) = k(H)\pi(f)$;
\item[ii.] 
for all $h \in H$, $\pi(\beta_h(f)) = \alpha_h(\pi(f))$;
\item[iii.] 
if $\pi(\beta_g(f)) = 0$ for all $g \in G$, then $f = 0$;
\item[iv.] 
and
\[
\lim_{x + H \to \infty} \| f(x) \| = 0.
\]
\end{itemize}

We show below that these conditions characterize the induced algebra. 
Note that $\ind_H^G(D, \alpha)$ may be identified with the section algebra of a continuous 
$C^*$-bundle over $G/H$ with fibers isomorphic to $D$ and factor maps $\pi_{x + H}$.

\begin{theorem}\label{thm:indalg}
Let $H$ be a closed subgroup of a locally compact abelian group $G$. Let $D$
and $E$ be \cs-algebras, 
$\alpha : H \to \Aut D$ and  $\gamma : G \to \Aut E$  strongly continuous
actions,  
$\rho : E \to D$ a surjective homomorphism, and 
 $i : C_0(G/H) \to Z(M(E))$ a $G$-equivariant homomorphism.
Suppose that 
\begin{itemize}
\item[i] 
For all $k \in C_0(G/H)$ and $e \in E$, $\rho(i(k)e) = k(H)\rho(e)$;
\item[ii] 
For all $h \in H$ and $e \in E$, $\rho(\gamma_h(e)) = \alpha_h(\rho(e))$;
\item[iii] 
If $\rho(\gamma_g(e)) = 0$ for all $g \in G$, then $e = 0$;
\item[iv] 
For all $e \in E$, 
\[
\lim_{x + H \to \infty} \| \rho(\gamma_x(e)) \| = 0.
\]
\end{itemize}
Then there is a (unique) $G$-equivariant isomorphism $\Psi : E \to \ind_H^G(D, \alpha)$
such that: $\pi\circ\Psi = \rho$ and $j(k)\Psi(e) = \Psi(i(k)e)$ 
for all $k \in C_0(G/H)$ and $e \in E$.
\end{theorem}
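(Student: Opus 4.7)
The plan is to define $\Psi$ by $\Psi(e)(x) := \rho(\gamma_{-x}(e))$; indeed, this formula is forced by the hypotheses, since any $\Psi'$ satisfying $\pi\circ\Psi' = \rho$ and $G$-equivariance must obey $\Psi'(e)(x) = \pi(\beta_{-x}(\Psi'(e))) = \pi(\Psi'(\gamma_{-x}(e))) = \rho(\gamma_{-x}(e))$, which also settles uniqueness. First I would verify that $\Psi(e)$ lies in $\ind_H^G(D,\alpha)$: continuity in $x$ comes from strong continuity of $\gamma$, the covariance $\Psi(e)(x - h) = \alpha_h(\Psi(e)(x))$ from (ii), and the vanishing of $x + H \mapsto \|\Psi(e)(x)\|$ at infinity from (iv) (using that $x \mapsto -x$ descends to a self-homeomorphism of $G/H$). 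That $\Psi$ is then a $G$-equivariant $*$-homomorphism is a direct computation.

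Next I would confirm the two identities. The condition $\pi \circ \Psi = \rho$ is simply evaluation at $x = 0$. For the compatibility with $j$ and $i$, I compute
\[
  \Psi(i(k)e)(x) = \rho\bigl(\gamma_{-x}(i(k)e)\bigr) = \rho\bigl(i(\tau_{-x}(k))\,\gamma_{-x}(e)\bigr) = \tau_{-x}(k)(H)\,\rho(\gamma_{-x}(e)) = k(x + H)\,\Psi(e)(x),
\]
which equals $(j(k)\Psi(e))(x)$. The second equality uses $G$-equivariance of $i$ after extending $\gamma$ to $M(E)$, the third uses (i), and the last is $\tau_{-x}(k)(H) = k(x+H)$ directly from the definition of $\tau$. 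Injectivity of $\Psi$ is precisely (iii) applied to $\Psi(e) = 0$.

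The main obstacle is surjectivity. Let $B := \Psi(E) \subseteq A := \ind_H^G(D, \alpha)$. As $\Psi$ is an injective $*$-homomorphism, $B$ is a closed $*$-subalgebra, and the compatibility just established shows that $B$ is stable under multiplication by $j(C_0(G/H))$; thus $B$ is a sub-$C_0(G/H)$-algebra of $A$ when $A$ is viewed as a $C_0(G/H)$-algebra via $j$. For each $y \in G$ the fiber of $A$ at $y + H$ identifies with $D$ via $\pi_{y+H}(f) := f(y)$, and since $\pi_{y+H}(\Psi(e)) = \rho(\gamma_{-y}(e)) = \pi(\Psi(\gamma_{-y}(e)))$, surjectivity of $\rho$ forces $\pi_{y+H}(B) = D$ for every $y + H$. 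A local-to-global argument via partitions of unity on the paracompact space $G/H$ will then show $B = A$: given $f \in A$ and $\epsilon > 0$, fix a compact $K \subseteq G/H$ outside which $\|f(x)\| < \epsilon$, choose $b_1, \dots, b_n \in B$ and open sets $V_1, \dots, V_n \subseteq G/H$ covering $K$ with $\|b_j(x) - f(x)\| < \epsilon$ whenever $x + H \in V_j$, and pick a partition of unity $\{k_0, k_1, \dots, k_n\} \subseteq C_0(G/H)$ subordinate to $\{G/H \setminus K, V_1, \dots, V_n\}$. Then $\sum_{j=1}^n j(k_j) b_j \in B$ approximates $f$ to within a fixed multiple of $\epsilon$; the delicate point is estimating the error off $K$, where $k_j(x) \ne 0$ forces $x + H \in V_j$ and hence $\|b_j(x)\| \le \|f(x)\| + \epsilon < 2\epsilon$, yielding a uniform bound.
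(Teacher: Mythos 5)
Your proposal is correct and follows essentially the same route as the paper: the same formula $\Psi(e)(x)=\rho(\gamma_{-x}(e))$, the same verifications via (i)--(iv), and surjectivity via the fact that $\Psi(E)$ is a $C_0(G/H)$-invariant, fiberwise surjective closed subalgebra of $\ind_H^G(D,\alpha)$. The only difference is that the paper cites the general density result for fiberwise dense $C_0$-submodules of section algebras (Proposition C.24 of \cite{wil:book07}) where you prove it by hand with a partition of unity, and you additionally spell out the uniqueness argument, which the paper leaves implicit.
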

\begin{proof}
For $e \in E$, we define a function $\Psi(e) : G \to D$ by $\Psi(e)(x) := \rho(\gamma_{-x}(e))$.
It is straightforward to check that $\Psi(e)$ is continuous and that 
$\Psi(e) \in \ind_H^G(D, \alpha)$.   Indeed we have
\[
\Psi(e)(x - h) = \rho (\gamma_{h-x}(e)) = \rho (\gamma_h(\gamma_{-x}(e)))  = \alpha_h(\rho(\gamma_{-x}(e)))
= \alpha_h(\Psi(e)(x)).
\]
It is also routine to check that the map thus defined $\Psi: E \to \ind_H^G(D, \alpha)$ 
is an equivariant $*$-homormorphism.  Injectivity follows from (iii) above.  
Let  $k \in C_0(G/H)$ and $e \in E$.  Then by (i) above and the $G$-equivariance of $i$
it follows that for all $x \in G$
\begin{align*}
\Psi(i(k)e)(x) &= \rho(\gamma_{-x}(i(k)e)) = \rho(i(\tau_{-x}(k))\gamma_{-x}(e)) 
               = \tau_{-x}(k)(H)\rho(\gamma_{-x}(e)) \\
               &= k(x+H)\Psi(e)(x) = (j(k)\Psi(e))(x),
\end{align*}
and hence $j(k)\Psi(e) = \Psi(i(k)e)$.  Thus  $\Psi$ is a map of $C_0(G/H)$-algebras.  
Since $\rho(e) = \Psi(e)(0)$ for all $e \in E$ we have $\pi\circ\Psi = \rho$.

Next we use Proposition C.24 of  \cite{wil:book07} (see also \cite[Proposition 14.1]{feldor:vol1}),
which states that a submodule of continuous sections of an upper semicontinuous $C^*$-bundle which is 
fiberwise dense must also be norm dense in the $C^*$-algebra of sections vanishing at infinity,
to prove that $\Psi(E)$ is dense in $\ind_H^G(D, \alpha)$.  
This will prove that  $\Psi$ is surjective and therefore an isomorphism.
We check the hypotheses a) and b) of the proposition.
Let $f \in \Psi(E)$ and let $k \in C_0(G/H)$, then $f = \Psi(e)$ for some $e \in E$ and so
\[
j(k)f = j(k)\Psi(e) = \Psi(i(k)e) \in \Psi(E).
\]
This proves condition (a).  
Now since each factor map is conjugate to $\pi\circ\beta_x$ for some $x \in G$, 
its restriction to $\Psi(E)$ is surjective.
Thus (b) holds and the result follows.
\end{proof}

\begin{remark}\label{rem:nondeg}
Let $D_1$ and $D_2$ be $C^*$-algebras.  A homomorphism $\phi: D_1 \to M(D_2)$ is said to be \emph{nondegenerate} 
if it extends uniquely to a unital map $\phi: M(D_1) \to M(D_2)$ or equivalently 
if $\{ \phi(e_\lambda) \}_\lambda$ converges strictly to the unit of $M(D_2)$
for every approximate identity $\{ e_\lambda \}_\lambda$ in $D_1$.
Property (iv) above is equivalent to the requirement that $i: C_0(G/H) \to M(E)$  be nondegenerate.
\end{remark}

\begin{remark}\label{rem:dual}
  Note that since $C_0(G/H) \cong C^*(\widehat{G/H}))$, the map $i$ is determined by a strictly continuous homomorphism 
$u : \widehat{G/H} \to UM(E)$  where $UM(E)$ is the unitary group of the multiplier algebra $M(E)$.  
The nondegeneracy of $i$ is equivalent to the requirement that $u_0 = 1$ and
condition (i) above is satisfied if and only if
$\rho(u_{\chi}e) = \rho(e)$ for all $\chi \in \widehat{G/H}$ and $e \in E$.
\end{remark}

\begin{remark}
Our characterization of the induced algebra is provided to facilitate the proof of our main result, namely, that the $C^*$-algebra of a certain groupoid extension regarded as an obstruction to lifting a cocycle is an induced $C^*$-algebra.  
One could in principle use Echterhoff's more general characterization (see the main theorem of  \cite{Ech90}), but this would have required the identification of the $C^*$-algebra of the quotient groupoid  with a certain quotient of the putative induced algebra.
The proof of this identification would use similar techniques with those in our proof and it would not necessarily lead to a simplification of our arguments. 

\end{remark}

\subsection{Multiplier algebras of groupoid $C^*$-algebras}
\label{subsec:mult-algebr-group}
Given a $C^*$-algebra $D$ let $M(D)$ denote its multiplier algebra. We
view $M(D)$ as the $C^*$-algebra $\LL(D_D)$  of adjointable maps
on the Hilbert $C^*$-module $D_D$ (see, for example, \cite[Section
2.3]{rw:morita} and \cite{lan:hilbert}). Recall that $D_D$ is a full Hilbert $C^*$-module
via $d\cdot e=de$ and $\langle d,e\rangle_D=d^*e$.

Assume that $\Sigma$ is a closed subgroupoid of a locally
compact Hausdorff groupoid $\Gamma$  such that $\Sigma^0=\go$. Assume
that $\Sigma$ is endowed with a Haar system
$\beta=\{\beta^u\}_{u\in\go}$ and that $\Gamma$ is endowed with a Haar
system $\hsg=\{\lambda^u\}_{u\in\go}$.
Then  there is a $*$-homomorphism $j : C^*(\Sigma) \to M(C^*(\Gamma))$
defined for $a\in C_c(\Sigma)$ and $f\in C_c(\Gamma)$ via
\begin{equation}\label{eq:embedding}
(j(a)(f))(\gamma)=\int_{\Sigma}a(\eta)f(\eta^{-1}\gamma)\,d\,\beta^{r(\gamma)}(\eta)
\end{equation}
such that $j(a)^*=j(a^*)$ (see \cite[Proposition II.2.4]{ren:groupoid}).
It is known that if $\Sigma$ and $\Gamma$ are locally compact
\emph{groups} then the map $j$
fails to be faithful in general (see, for example,
\cite{BekVal_Ast95}). However,  if $\Sigma$ is a clopen subgroup of $\Gamma$
 or $\Gamma$ is an amenable group then the map $j$ is a faithful
*-homomorphism of $C^{*}(\Sigma)$ into $C^{*}(\Gamma)$
(see \cite[Proposition 1.2]{rie:aim74}, \cite[Theorem 1.3]{BekVal_Ast95}, \cite[Corollary
1.5]{BekLauSch_MatAnn92}). 

We prove next that the above
mentioned results hold for groupoid $C^{*}$-algebras as well: the map
$j$ is faithful if $\Sigma$ is a clopen subgroupoid of $\Gamma$ or if
$\Gamma$  is an amenable groupoid. 

Before proceeding further, we recall the definition of induced
representations from closed subgroupoids following
\cite[Section 2]{ionwil:pams08} (see also \cite[Section
II.2]{ren:groupoid}). Let $\Sigma^{\Gamma}:=\Gamma*\Gamma/\Sigma$ be 
the imprimitivity groupoid. Then $C_{c}(\Gamma)$ is a
pre-$C_{c}(\Sigma^{\Gamma})-C_{c}(\Sigma)$-imprimitivity bimodule
with actions and inner products given by

\begin{align*}  
  F\cdot \varphi(z)&=\int_{\Gamma} F\bigl([z,y]\bigr) \varphi(y)\,d\hsg^{s(z)}
  (y) \\
\varphi\cdot g(z)&=\int_{\Sigma} \varphi(z h)g(h^{-1}) \,d\beta^{s(z)} (h)
\\
\langle\varphi,\psi\rangle_{\ast}(h)&= \int_{\Gamma} \overline{\varphi(y)} \psi(y h)\,
d\hsg^{r(h)} (y) \\
_{\ast}\langle\varphi,\psi\rangle\bigl([x,y]\bigr) &= \int_{\Sigma} \varphi(x
h)\overline{\psi(y h)} \, d\beta^{s(x)}(h).
\end{align*}
If $L$ is a representation of $C^{*}(\Sigma)$ on $B(\HH_{L})$ then the induced representation
$\Ind_{\Sigma}^{\Gamma}L$ acts on the completion $\HH_{\Ind L}$ of
$C_{c}(\Gamma)\odot \HH_{L}$ with respect to the pre-inner product
given on elementary tensors by
  \begin{equation}\label{eq:innprod}
    (\varphi\otimes h\,,\,\psi\otimes k)=(L(\langle \psi,\varphi\rangle_{\ast})h\,,\,k).
  \end{equation}
If $\varphi\otimes_{\Sigma}h$ denotes the class of $\varphi\otimes h$ in
$\HH_{\Ind L}$, then the induced representation is given by
\begin{equation}\label{eq:indrep}  
  \Ind_{\Sigma}^{\Gamma}L(f)(\varphi\otimes_{\Sigma}h)=f*\varphi\otimes_{\Sigma}h
\end{equation}
for $f\in C_{c}(\Gamma)$, where
  \[
   f*\varphi(\gamma)=\int_{\Gamma}
   f(\eta)\varphi(\eta^{-1}\gamma)\,d\hsg^{r(\gamma)}(\eta).     
   \]
In the following we suppress $\Sigma$ from $\varphi\otimes_{\Sigma}h$
to simplify slightly the notation.

\subsubsection{The clopen case}

 Assume first that $\Sigma$ is a clopen
subgroupoid of $\Gamma$. Then the
restriction of the Haar system  $\hsg=\{\hsg^{u}\}_{u\in\go}$ to $\Sigma$ is a Haar
system on $\Sigma$. We assume in the following that $\Sigma$ is
endowed with this Haar system, that is, $\beta=\lambda|_{\Sigma}$. 
Then the map $i_{\Sigma}:C_{c}(\Sigma)\to C_{c}(\Gamma)$ defined via
\begin{equation*}
  i_{\Sigma}(f)(\gamma)=
  \begin{cases}
    f(\gamma)&\text{ if }\gamma\in \Sigma\\
    0&\text{otherwise}
  \end{cases}
\end{equation*}
is a well defined faithful $*$-homomorphism. We prove next that
$i_{\Sigma}$ extends to a faithful $*$-homomorphism of
$C^{*}(\Sigma)$ into $C^{*}(\Gamma)$, generalizing Proposition 1.2 of
\cite{rie:aim74}. 

\begin{proposition}\label{prop:discretecase}
  With notation as above, $i_{\Sigma}$
  extends to an embedding $i_{\Sigma}:C^{*}(\Sigma)\to C^{*}(\Gamma)$
  and, therefore, $C^{*}(\Sigma)$ can be viewed as a subalgebra of $C^{*}(\Gamma)$.
\end{proposition}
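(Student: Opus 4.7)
My plan is to exhibit, for every faithful nondegenerate representation $L$ of $C^*(\Sigma)$ on $\HH_L$, a unitary copy of $L$ inside $\Ind_\Sigma^\Gamma L \circ i_\Sigma$. Once this is done, the two-sided bound
\[
\|a\|_{C^*(\Sigma)} \;=\; \|L(a)\| \;\le\; \|\Ind_\Sigma^\Gamma L(i_\Sigma(a))\| \;\le\; \|i_\Sigma(a)\|_{C^*(\Gamma)} \;\le\; \|a\|_{C^*(\Sigma)}
\]
forces $i_\Sigma$ to extend isometrically to $C^*(\Sigma)\hookrightarrow C^*(\Gamma)$, the last inequality being the automatic contractivity of any $*$-homomorphism (via the map $j$ of \eqref{eq:embedding}).

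The first step is to check that $i_\Sigma$ behaves well on $C_c$: since $\Sigma$ is clopen, extension by zero carries $C_c(\Sigma)$ continuously into $C_c(\Gamma)$, and the compatibility $\beta=\hsg|_\Sigma$ makes $i_\Sigma$ a $*$-homomorphism on $C_c(\Sigma)$. The crux is the following clopen support observation: if $y$ and $yh$ both lie in $\Sigma$, then $h = y^{-1}(yh) \in \Sigma$, because $\Sigma$ is closed under multiplication and inversion. Applied to the convolution $i_\Sigma(a) * i_\Sigma(\varphi)$ in $C_c(\Gamma)$ and to the $C_c(\Sigma)$-valued imprimitivity inner product $\langle i_\Sigma(\varphi), i_\Sigma(\psi)\rangle_*$ from Section~\ref{subsec:mult-algebr-group}, this shows both quantities are supported in $\Sigma$ and restrict there to $a *_\Sigma \varphi$ and $\varphi^* *_\Sigma \psi$ respectively, i.e.\ the inclusion $i_\Sigma: C_c(\Sigma) \hookrightarrow C_c(\Gamma)$ intertwines the regular bimodule structure of $C_c(\Sigma)$ over itself with the restriction of the imprimitivity bimodule structure.

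With these identities in hand, for $\varphi \in C_c(\Sigma)$ and $\xi \in \HH_L$ the formula \eqref{eq:innprod} yields
\[
\|i_\Sigma(\varphi)\otimes\xi\|^2 \;=\; \bigl(L(\varphi^* *_\Sigma \varphi)\xi,\xi\bigr) \;=\; \|L(\varphi)\xi\|^2.
\]
Hence the rule $V : i_\Sigma(\varphi)\otimes\xi \mapsto L(\varphi)\xi$ extends to a well-defined isometry from the closure $H_0$ of such simple tensors onto $\HH_L$, surjective because $L$ is nondegenerate. The convolution identity together with \eqref{eq:indrep} shows that $H_0$ is invariant under $\Ind_\Sigma^\Gamma L \circ i_\Sigma$ and that $V$ intertwines this restriction with $L$, completing the argument. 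I expect the main obstacle to be the bookkeeping in the middle paragraph: verifying that the support arguments in the clopen case really do identify the restriction of the imprimitivity bimodule structure to $C_c(\Sigma) \subset C_c(\Gamma)$ with the regular $C_c(\Sigma)$-bimodule structure on itself. Once that is secured, the representation-theoretic step is the groupoid analogue of Rieffel's argument in \cite{rie:aim74}.
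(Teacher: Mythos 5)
Your argument is correct, and its essential half coincides with the paper's. For the inequality $\|a\|_{C^*(\Sigma)}\le\|i_\Sigma(a)\|_{C^*(\Gamma)}$ you do exactly what the paper does: induce a representation $L$ of $C^*(\Sigma)$ to $\Gamma$, observe (via the clopen support argument and $\beta=\hsg|_\Sigma$) that $\langle i_\Sigma(\varphi),i_\Sigma(\psi)\rangle_*=\varphi^**_\Sigma\psi$ and $i_\Sigma(a)*i_\Sigma(\varphi)=i_\Sigma(a*_\Sigma\varphi)$, and conclude that $\varphi\otimes\xi\mapsto L(\varphi)\xi$ is a unitary from the closure of $C_c(\Sigma)\odot\HH_L$ inside $\HH_{\Ind L}$ intertwining $L$ with a subrepresentation of $\Ind_\Sigma^\Gamma L\circ i_\Sigma$; this is the paper's map $U$, and you supply somewhat more of the bookkeeping the paper leaves implicit. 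Where you genuinely diverge is the reverse inequality $\|i_\Sigma(a)\|_{C^*(\Gamma)}\le\|a\|_{C^*(\Sigma)}$: the paper proves it intrinsically, invoking Renault's disintegration theorem to write any representation of $C^*(\Gamma)$ as the integrated form of a unitary representation of $\Gamma$ and then restricting that unitary representation to the clopen subgroupoid $\Sigma$, whereas you deduce it from the boundedness of the canonical map $j:C^*(\Sigma)\to M(C^*(\Gamma))$ of \eqref{eq:embedding} together with the identification $j(a)=i_\Sigma(a)$ as multipliers (valid since $\beta=\hsg|_\Sigma$) and the isometry of $C^*(\Gamma)\hookrightarrow M(C^*(\Gamma))$. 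That shortcut is legitimate within the paper's framework, but be explicit that the ``automatic contractivity'' is that of $j$, which the preliminaries (citing Renault, Proposition II.2.4) already assert to be defined on the completed algebra $C^*(\Sigma)$; the extension-by-zero map on $C_c(\Sigma)$ alone is a $*$-homomorphism of convolution algebras and carries no automatic norm bound. The trade-off is that your route is shorter but imports the nontrivial content of that inequality from the cited multiplier-map result, while the paper's disintegration argument is self-contained at that point and shows directly how clopenness is used on both sides of the estimate.
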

\begin{proof}
  We need to show that $\Vert i_{\Sigma}(f)\Vert_{C^{*}(\Gamma)}=\Vert
  f\Vert_{C^{*}(\Sigma)}$ for all $f\in C_{c}(\Sigma)$. Let $L$ be a
  representation of $C^{*}(\Gamma)$. Renault's disintegration theorem
  (see \cite[Proposition 4.2]{ren:jot87} and also \cite[Section
  7]{muhwil:nyjm08}) implies that $L$ is the integrated form of a
  unitary representation $(\mu,\go *\HH,V)$  of $\Gamma$. Since $\Sigma$
  is a clopen subgroupoid of $\Gamma$, any unitary representation
  of $\Gamma$ restricts to a unitary representation of
  $\Sigma$. Therefore $\Vert f\Vert_{C^{*}(\Sigma)}\ge \Vert
  i_{\Sigma}(f)\Vert_{C^{*}(\Gamma)}$ for all $f\in C_{c}(H)$.

  For the converse inequality, let $(L,\HH_{L})$ be a representation of
  $C^{*}(\Sigma)$. Since $\Sigma$ is a closed subgroupoid of
  $\Gamma$, the representation $L$ can be induced to a representation
  $\Ind_{\Sigma}^{\Gamma}L$ of $C^{*}(\Gamma)$
  as in \eqref{eq:indrep}.   
   Let $\HH_{\text{res}}$ be the closed subspace of $\HH_{\Ind\,L}$
   obtained by completing $C_c(\Sigma)\odot \HH_L$ via the inner product
   in \eqref{eq:innprod}. Then $U:\HH_{\text{res}}\to \HH_L$ defined
   on elementary tensors    via $U(\varphi\otimes h)=L(\varphi)h$ defines
   a unitary that intertwines $L$ with a subrepresentation of
   $\Ind_{\Sigma}^{\Gamma}L$ restricted to $C^*(\Sigma)$. It follows that
   $\Vert f\Vert_{C^{*}(\Sigma)}\le \Vert 
  i_{\Sigma}(f)\Vert_{C^{*}(\Gamma)}$ for all $f\in
  C_{c}(\Sigma)$. Therefore $\Vert i_{\Sigma}(f)\Vert_{C^{*}(\Gamma)}=\Vert
  f\Vert_{C^{*}(\Sigma)}$ for all $f\in   C_{c}(\Sigma)$ and one can view
  $C^*(\Sigma)$ as a subalgebra of $C^*(\Gamma)$.
\end{proof}

\subsubsection{The amenable case}
\label{sec:amenable-case}

We assume now that $(\Sigma,\beta)$ is a closed subgroupoid of $(\Gamma,
\hsg)$ and   $\Gamma$ is amenable in the
sense of \cite{anaren:amenable00}. It follows that $\Sigma$ is
amenable as well (see \cite[Proposition 5.1.1]{anaren:amenable00}).
\begin{proposition}\label{prop:amenablecase}
Assume that  $(\Sigma,\beta)$ is a closed subgroupoid of an amenable groupoid $(\Gamma,
\hsg)$ such that $\Sigma^0=\go$. Then the map $j$ defined in
\eqref{eq:embedding} is faithful.
\end{proposition}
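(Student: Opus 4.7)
The plan is to adapt the induced-representation argument of Proposition~\ref{prop:discretecase} to the amenable setting, replacing the unavailable inclusion $C_c(\Sigma) \hookrightarrow C_c(\Gamma)$ (which in the clopen case let one identify $L$ as a subrepresentation of $\Ind_{\Sigma}^{\Gamma}L$ restricted to $C^*(\Sigma)$) with an orbital decomposition of the regular representation of $C^*(\Gamma)$, made possible by amenability.

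First, I would note that since $\Gamma$ is amenable, so is $\Sigma$ (by \cite[Proposition 5.1.1]{anaren:amenable00} as recalled just above), so that $C^*(\Sigma)=C^*_r(\Sigma)$ and the norm on $C^*(\Sigma)$ is detected by the family of regular representations $\{\pi^\Sigma_v\}_{v\in\go}$ acting on the source fibers of $\Sigma$. I would then check that $j$ is nondegenerate by choosing an approximate identity $\{e_\kappa\}\subset C_c(\Sigma)$ supported in shrinking neighbourhoods of $\Sigma^0=\go$: a direct estimate from \eqref{eq:embedding} shows $j(e_\kappa)f\to f$ in $C^*(\Gamma)$ for every $f\in C_c(\Gamma)$, so any nondegenerate representation of $C^*(\Gamma)$ extends canonically to $M(C^*(\Gamma))$ and composes with $j$.

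Next, for each $u\in\go$ I consider the regular representation $\pi^\Gamma_u$ of $C^*(\Gamma)$ on $L^2(\Gamma_u,\lambda_u)$ and decompose its Hilbert space according to the left $\Sigma$-orbits in $\Gamma_u$. For each $\gamma\in\Gamma_u$ the assignment $\sigma\mapsto\sigma\gamma$ gives a bijection from the $\Sigma$-fiber above $r(\gamma)$ onto the orbit $\Sigma\gamma\subset\Gamma_u$, and a Bruhat-type section for $\Sigma\backslash\Gamma_u$ yields a disintegration
\[
L^2(\Gamma_u,\lambda_u)\;\cong\;\int^{\oplus} L^2(\Sigma_{r(\gamma)},\beta_{r(\gamma)})\,d\mu(\gamma)
\]
along which the operator $\pi^\Gamma_u(j(a))$ acts fiberwise, by a direct computation using \eqref{eq:embedding} and left-invariance of the Haar systems, as the regular representation operator $\pi^\Sigma_{r(\gamma)}(a)$ on $L^2(\Sigma_{r(\gamma)})$. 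The hypothesis $\Sigma^0=\go$ is essential here, so that every orbit is indeed a full $\Sigma$-translate. Specialising to $u=v\in\go$ and considering the orbit of the unit $v\in\Gamma_v$, one recovers $\pi^\Sigma_v$ as a subrepresentation of $\pi^\Gamma_v\circ j$; varying $v$ over $\go$ then gives $\|j(a)\|\ge \sup_v\|\pi^\Sigma_v(a)\|=\|a\|_{C^*(\Sigma)}$, and combined with the contractivity of $j$ this shows $j$ is isometric.

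The main obstacle is the orbital disintegration step: one must produce a Bruhat-type section for the orbit space $\Sigma\backslash\Gamma_u$ and verify that the Haar measure $\lambda_u$ decomposes compatibly with the Haar system $\beta$ on $\Sigma$, so that $\pi^\Gamma_u(j(a))$ really is the direct integral of the $\pi^\Sigma_{r(\gamma)}(a)$'s. This is the groupoid analogue of the classical Bruhat-section decomposition $L^2(G)\cong L^2(G/H)\otimes L^2(H)$ that powers the corresponding result for groups, and amenability intervenes at both ends of the argument: it makes the family $\{\pi^\Sigma_v\}$ faithful on $C^*(\Sigma)$ and equates $C^*(\Gamma)$ with $C^*_r(\Gamma)$ so that $\pi^\Gamma_u$ is defined on the full $C^*$-algebra.
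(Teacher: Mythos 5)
Your overall strategy is the same as the paper's: show that the left regular representation of $\Sigma$ is (weakly) contained in the restriction along $j$ of the left regular representation of $\Gamma$, and then use amenability of $\Sigma$ (inherited from $\Gamma$) to conclude that the regular representation already detects the full norm of $C^*(\Sigma)$. The paper obtains the containment abstractly, by observing via induction in stages that $L_\Gamma\cong\Ind_\Sigma^\Gamma L_\Sigma$ and then invoking the group-case weak-containment argument of Bekka--Valette; you attempt to realize the same containment concretely by disintegrating $L^2(\Gamma_u,\lambda_u)$ over the left $\Sigma$-orbits. The fiberwise computation you describe (that on each orbit $\Sigma\gamma$ the operator $\pi^\Gamma_u(j(a))$ acts as $\pi^\Sigma_{r(\gamma)}(a)$ after the identification $\sigma\mapsto\sigma\gamma$) is correct as far as it goes, modulo the measure disintegration you flag.

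The genuine gap is in the final step, where you claim that by ``considering the orbit of the unit $v\in\Gamma_v$'' one recovers $\pi^\Sigma_v$ as a \emph{subrepresentation} of $\pi^\Gamma_v\circ j$. The orbit $\Sigma v=\Sigma_v$ is a closed subset of $\Gamma_v$ which in general is $\lambda_v$-null --- and this is precisely the situation of interest in this paper, where $\Sigma_\phi$ is a closed but typically non-open subgroupoid of $\Gamma\times B$ (compare $\Z\subset\R$: the orbit of $0$ is Lebesgue-null). So there is no nonzero subspace of $L^2(\Gamma_v,\lambda_v)$ of functions supported on that single orbit, and the claimed subrepresentation does not exist. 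What the direct-integral decomposition actually yields is $\|\pi^\Gamma_v(j(a))\|=\operatorname{ess\,sup}_\gamma\|\pi^\Sigma_{r(\gamma)}(a)\|$, and to deduce $\|\pi^\Sigma_v(a)\|\le\|\pi^\Gamma_v(j(a))\|$ one still needs a weak-containment argument: for instance, that $w\mapsto\|\pi^\Sigma_w(a)\|$ is lower semicontinuous and that $v$ lies in the support of the pushforward under $r$ of the disintegrating measure (which follows from $\operatorname{supp}\lambda_v=\Gamma_v$). With that repair --- and with the Bruhat-type disintegration itself carried out, which you correctly identify as the main technical burden and which is in effect what the cited induction-in-stages theorem supplies --- your argument closes and reproduces the paper's proof in explicit form.
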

 We recall first the definition of the left regular representation of
 $C^*(\Gamma)$. Let $\mu$ be a quasi-invariant measure
on $\go$ with full support. The \emph{left regular representation} of $C^*(\Gamma)$ is the
representation $L_\Gamma:=\Ind_{\go}^\Gamma\mu$ induced from
$\mu$. By using induction in stages (see \cite[Theorem 4]{ionwil:pams08}) it
follows that if $\Sigma$ is a closed subgroupoid of $\Gamma$ such that
$\Sigma^0=\go$ then
$L_\Gamma$ is unitarily equivalent to $\Ind_\Sigma^\Gamma L_\Sigma$. 

\begin{proof}[Proof of Proposition \ref{prop:amenablecase}]
  The proof is virtually
  identical with the group case (see, for example, the proof of
  \cite[Thm. 1.3]{BekVal_Ast95}): by the amenability of $\Sigma$, any
  representation of $C^*(\Sigma)$ is weakly contained in the left regular
  representation of $C^*(\Sigma)$, which is itself  contained in the restriction to
  $C^*(\Sigma)$ of the left regular representation of $C^*(\Gamma)$.
\end{proof}
The authors would like to thank Alcides Buss and Dana P. Williams for
useful conversations and suggestions that led to a significant
shortening of our
original 
proof of Proposition \ref{prop:amenablecase}.

\section{Twists and Short Exact Sequences}
\label{sec:Twists}

Let $\Gamma$ be a locally compact Hausdorff groupoid and let $G$ be a
locally compact Hausdorff abelian group. 
The set of continuous 1-cocyles from $\Gamma$ to $G$ is defined via
\[
Z_\Gamma(G)=Z^1(\Gamma,G):=\{\,\phi:\Gamma\to
G\,|\,\phi(\gamma_1\gamma_2)=\phi(\gamma_1)+\phi(\gamma_2)\,\text{for
  all}\,(\gamma_1,\gamma_2)\in \Gamma^2\,\}.
\]
Then $Z_\Gamma(G)$ is an abelian group and the map $G\mapsto
Z_\Gamma(G)$ is a functor.
 
\begin{definition}
  Let $A$ be an abelian group and $\Gamma$ a groupoid. A \emph{twist} by
  $A$ over $\Gamma$ is a central groupoid extension 
  \begin{equation*}
    \go\times A\xrightarrow{j}\Sigma\xrightarrow{\pi} \Gamma,
  \end{equation*}
  where $\Sigma^{0}=\go$, $j$ is injective, $\pi$ is surjective, and
  $j(r(\sigma),a)\sigma=\sigma j(s(\sigma),a)$ for all $\sigma\in
  \Sigma$ and $a\in A$.
\end{definition}

\begin{example}
  The \emph{semi-direct} product $\Gamma\times A$ of $\Gamma$ and $A$
   is called the \emph{trivial twist}. Recall from \cite{kum:jot88} that 
  $(\gamma_{1},a_{1})(\gamma_{2},a_{2})=
  (\gamma_{1}\gamma_{2},a_{1}+a_{2})$ provided that
  $s(\gamma_{1})=r(\gamma_{2})$, and
  $(\gamma,a)^{-1}=(\gamma^{-1},-a)$. Then $\Gamma\times A$ is a twist
  by $A$ via $j_0(u,a)=(u,a)$ for $(u,a)\in \go\times A$, and
  $\pi_0(\gamma,a)=\gamma$ for $(\gamma,a)\in \Gamma\times A$.
\end{example}
Following Definition 2.5 of \cite{kum:jot88} we say that two twists by
$A$ are  properly isomorphic if there is a twist morphism
between them which preserves the inclusion of $\go\times A$.
The following lemma is a generalization of Proposition 2.2 of \cite{kum:jot88}.
\begin{lemma}\label{lem:trivial}
  A twist $\Sigma$ by $A$ is properly isomorphic to a trivial twist if and only if there is a
  groupoid homomorphism $\tau:\Gamma\to\Sigma$ such that $\pi\tau=\operatorname{id}_\Gamma$.
\end{lemma}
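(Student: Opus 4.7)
The plan is to prove both implications directly by constructing the relevant maps from the given data. For the forward direction, a proper isomorphism $\Phi : \Gamma \times A \to \Sigma$ with the trivial twist supplies a section by the formula $\tau(\gamma) := \Phi(\gamma, 0)$. Since $\gamma \mapsto (\gamma, 0)$ is a continuous groupoid homomorphism $\Gamma \to \Gamma \times A$ and $\pi_0 \circ \Phi^{-1} = \pi$ by the proper isomorphism property, $\tau$ is a continuous groupoid homomorphism with $\pi\tau = \operatorname{id}_\Gamma$.

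For the converse, assume we are given a continuous groupoid homomorphism $\tau:\Gamma\to\Sigma$ with $\pi\tau=\operatorname{id}_\Gamma$. Note that $\tau$ necessarily fixes units, since $\pi$ restricted to $\Sigma^0=\go$ is the identity and groupoid homomorphisms send units to units. Define $\Phi:\Gamma\times A\to\Sigma$ by
\[
\Phi(\gamma,a) := \tau(\gamma)\,j(s(\gamma),a).
\]
I would first check that $\Phi$ is a groupoid homomorphism. For composable pairs $(\gamma_1,a_1)$ and $(\gamma_2,a_2)$, one uses the centrality condition $j(r(\sigma),a)\sigma = \sigma j(s(\sigma),a)$ applied to $\sigma=\tau(\gamma_2)$ (noting that $r(\tau(\gamma_2))=r(\gamma_2)=s(\gamma_1)$ and $s(\tau(\gamma_2))=s(\gamma_2)$) to commute $j(s(\gamma_1),a_1)$ past $\tau(\gamma_2)$, then combine with the homomorphism property of $\tau$ and additivity in the $A$-coordinate of $j$.

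Next I would verify $\Phi$ is a bijection. Injectivity follows from applying $\pi$ (which gives $\gamma=\gamma'$) and then using injectivity of $j$. For surjectivity, given $\sigma\in\Sigma$, set $\gamma := \pi(\sigma)$; then $\pi(\sigma\tau(\gamma)^{-1}) = \gamma\gamma^{-1}=r(\gamma)$, so by exactness of the extension $\sigma\tau(\gamma)^{-1}=j(r(\gamma),a)$ for a unique $a\in A$, whence $\sigma = \Phi(\gamma,a)$ after another application of centrality. Finally, the identities $\pi\circ\Phi=\pi_0$ and $\Phi\circ j_0=j$ are immediate from the definitions (the latter uses $\tau(u)=u$ for $u\in\go$).

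The only real subtlety is topological: one must show $\Phi$ is a homeomorphism, not merely a continuous algebraic isomorphism. Continuity of $\Phi$ is automatic from continuity of $\tau$, $j$, $s$, and multiplication in $\Sigma$. For continuity of $\Phi^{-1}$, the map $\sigma\mapsto (\pi(\sigma),\,j^{-1}(\sigma\tau(\pi(\sigma))^{-1}))$ is continuous because $\pi$ and $\tau$ are continuous and $j$ is a homeomorphism onto its image in the extension (which is part of the standing assumption that $\Sigma$ is a topological groupoid extension). This last point, establishing that $j^{-1}$ on the kernel is continuous, is the main technical issue, but it is essentially built into the definition of a twist; once it is invoked, the map $\Phi$ is a proper isomorphism of twists, completing the proof.
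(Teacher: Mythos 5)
Your proposal is correct and follows essentially the same route as the paper: the section is obtained by restricting the isomorphism to $\Gamma\times\{0_A\}$, and conversely the map $(\gamma,a)\mapsto\tau(\gamma)\,j(s(\gamma),a)$ is shown to be a proper isomorphism. In fact you verify more than the paper does explicitly (the homomorphism property via centrality, bijectivity via exactness, and the continuity of the inverse), while the paper only records the compatibility identities $\tilde\tau\circ j_0=j$ and $\pi\circ\tilde\tau=\pi_0$ and leaves the rest as routine.
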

\begin{proof}
  If $\tilde{\tau}:\Gamma\times A\to \Sigma$ is a twist isomorphism then we can define
  $\tau:\Gamma\to \Sigma$ via $\tau(\gamma)=\tilde{\tau}(\gamma,0_A)$, where $0_A$
  is the identity of $A$. It is easy to check that $\tau$ is a
  groupoid homomorphism and that $\pi\tau=\operatorname{id}_\Gamma$.

  Assume now that there is a groupoid homomorphism $\tau:\Gamma\to
  \Sigma$  such that $\pi\tau=\operatorname{id}_\Gamma$. Then we can define 
  $\tilde{\tau}:\Gamma\times A\to \Sigma$ via
  $\tilde{\tau}(\gamma,a)=\tau(\gamma)j(s(\gamma),a)$. We check next
  that $\tilde{\tau}$ is a twist isomorphism. Let $(u,a)\in\go\times
  A$. Then
  \[
  \tilde{\tau}(j_0(u,a))=\tilde{\tau}(u,a)=\tau(u)j(u,a)=\tau(u)\tau(u)j(u,a)=\tau(u)j(u,a)\tau(u)=j(u,a). 
  \]
  If $(\gamma,a)\in \Gamma\times A$ then
  \[
  \pi(\tilde{\tau}(\gamma,a))=\pi(\tau(\gamma)j(s(\gamma),a))=\gamma\pi(j(s(\gamma),a))=\gamma=\pi_0(\gamma,a). 
  \qedhere \]
\end{proof}
Following \cite{kum:jot88} we write $T_{\Gamma}(A)$ for the collection
of proper isomorphism classes of twists by $A$ and we write
$[\Sigma]\in T_{\Gamma}(A)$. We endow $T_{\Gamma}(A)$ with the
operation
$[\Sigma]+[\Sigma^{\prime}]:=[\nabla_{*}^{A}(\Sigma*_{\Gamma}\Sigma^{\prime})]$,
where $\nabla^{A}\in \operatorname{Hom}_{\Gamma}(A\oplus A,A)$ is
defined via $\nabla^{A}(a,a^{\prime})=a+a^{\prime}$ (see
\cite[Proposition 2.6]{kum:jot88}). Then $T_{\Gamma}(A)$ is an abelian group
with neutral element $[\Gamma\times A]$.  
It can be shown that $A\mapsto T_\Gamma(A)$ is a half-exact functor.

Suppose that $B$ and $C$ are locally compact  abelian groups. If
$p:B\to C$ is a homomorphism and $\phi\in Z_{\Gamma}(C)$,
then the \emph{obstruction twist} determined by $\phi$  is defined via
\begin{equation}
  \label{eq:pullback}
  \Sigma_{\phi}=\{(\gamma,b)\in \Gamma\times B\,:\,\phi(\gamma)=p(b)\}.
\end{equation}
We establish that $\Sigma_{\phi}$ is indeed a twist in the following
proposition and show that it has a Haar system in the next section (see
equation \eqref{eq:HaarSyst} in Section \ref{sec:struct}).

If $\Gamma$ is  an \'etale groupoid, it has a basis consisting of open bisections, that is,
open subsets to which the restrictions of both the range and source maps are injective.  

\begin{proposition}\label{lem:twist_prop}
Assume that $p:B\to C$ is a surjective homomorphism of locally compact abelian
groups and let $\phi\in Z_{\Gamma}(C)$. Then $\Sigma_{\phi}$ is a
closed subgroupoid of $\Gamma\times B$ and it is a twist of $\Gamma$
by $A:=\ker p$. $\Sigma_{\phi}$ is properly isometric to the trivial twist if and only if there is
$\tilde{\phi}\in Z_{\Gamma}(B)$ such that $\phi=p_{*}\tilde{\phi}$. The projection
$\pi_1:\Sigma_{\phi}\to \Gamma$ is a surjective groupoid
morphism. If $\Gamma$ is 
\'etale and $A$ is discrete then $\Sigma_{\phi}$  is \'etale 
 and $\pi_1$ is a local homeomorphism. 
\end{proposition}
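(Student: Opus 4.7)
The plan is to verify the claimed properties in the order stated. First, $\Sigma_\phi$ is the preimage of $0_C$ under the continuous map $\Gamma \times B \to C$, $(\gamma, b) \mapsto \phi(\gamma) - p(b)$, so it is closed. The cocycle identity for $\phi$ together with the homomorphism property of $p$ gives closure under groupoid multiplication, and $\phi(\gamma^{-1}) = -\phi(\gamma)$ gives closure under inversion. The unit space of $\Sigma_\phi$ reduces to $\{(u, 0) : u \in \go\}$, since the cocycle identity forces $\phi$ to vanish on units, and we identify it with $\go$.

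Next, to realize $\Sigma_\phi$ as a twist by $A = \ker p$, I would define $j_\phi(u, a) := (u, i(a))$ for $(u,a) \in \go \times A$ (with $i : A \hookrightarrow B$ the inclusion) and $\pi_1(\gamma, b) := \gamma$. Then $j_\phi$ is a continuous injection into the isotropy of $\Sigma_\phi$, $\pi_1$ is multiplicative and surjective (the latter because $p$ is surjective), and $\pi_1$ is open because the open mapping theorem makes $p$ open, so projections of basic open sets of $\Sigma_\phi$ are open in $\Gamma$. The centrality identity $j_\phi(r(\sigma), a)\sigma = \sigma j_\phi(s(\sigma), a)$ is an immediate calculation using that $B$ is abelian.

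For the triviality criterion, I would invoke Lemma \ref{lem:trivial}: $\Sigma_\phi$ is properly isomorphic to the trivial twist if and only if there exists a groupoid homomorphism $\tau : \Gamma \to \Sigma_\phi$ with $\pi_1 \circ \tau = \mathrm{id}_\Gamma$. Any such $\tau$ must have the form $\tau(\gamma) = (\gamma, \tilde{\phi}(\gamma))$ for a continuous $\tilde{\phi} : \Gamma \to B$; membership in $\Sigma_\phi$ forces $p \circ \tilde{\phi} = \phi$, and multiplicativity of $\tau$ is precisely the cocycle condition for $\tilde{\phi}$. Conversely, any $\tilde{\phi} \in Z_\Gamma(B)$ with $p_*\tilde{\phi} = \phi$ produces such a $\tau$.

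For the étale assertion, the key input is that when $A = \ker p$ is discrete, the continuous open surjective homomorphism $p : B \to C$ is a covering map, and in particular admits continuous local sections. Given $(\gamma_0, b_0) \in \Sigma_\phi$, I would choose an open bisection $U$ of $\Gamma$ containing $\gamma_0$ small enough that $\phi(U)$ lies inside the domain of some continuous local section $s$ of $p$ satisfying $s(\phi(\gamma_0)) = b_0$; then $W := \{(\gamma, s(\phi(\gamma))) : \gamma \in U\}$ is an open neighborhood of $(\gamma_0, b_0)$ in $\Sigma_\phi$ mapped homeomorphically onto $U$ by $\pi_1$. Since $U$ is a bisection so is $W$, and since $(\gamma_0, b_0)$ was arbitrary this exhibits $\Sigma_\phi$ as étale with $\pi_1$ a local homeomorphism. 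The main obstacle I anticipate is precisely this last step: extracting the local section of $p$ at $\phi(\gamma_0)$ passing through $b_0$ and matching it to a bisection of $\Gamma$ around $\gamma_0$; the remaining items are routine verifications of the groupoid and central extension axioms.
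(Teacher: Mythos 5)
Your proposal is correct, and for the first three assertions (closedness, the twist structure via $j(u,a)=(u,a)$ and $\pi_1(\gamma,b)=\gamma$, and the triviality criterion through Lemma~\ref{lem:trivial} by writing any section as $\tau(\gamma)=(\gamma,\tilde{\phi}(\gamma))$) it coincides with the paper's argument. The only genuine divergence is the \'etale step. The paper argues directly: $\pi_1$ is open (because $p$ is open), so it suffices to prove local injectivity, which is done by choosing an open bisection $U\ni\gamma$, a neighborhood $V_0$ of $0_B$ with $V_0\cap\ker p=\{0_B\}$, and $V\ni b$ with $V-V\subseteq V_0$, so that $\pi_1$ is injective on $(U\times V)\cap\Sigma_\phi$. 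You instead use that $p$ is a covering map when $\ker p$ is discrete (open mapping theorem plus discreteness of the kernel) and pull back a local section $s$ of $p$ through $b_0$ over a neighborhood of $\phi(\gamma_0)$, obtaining the explicit graph-type neighborhood $W=\{(\gamma,s(\phi(\gamma))):\gamma\in U\}$ on which $\pi_1$ is a homeomorphism onto $U$. The two arguments rest on the same underlying fact (a neighborhood of $0_B$ meeting $\ker p$ only in $0_B$), but yours buys an explicit local inverse of $\pi_1$, hence a local trivialization of $\Sigma_\phi$ over $\Gamma$, at the cost of invoking the covering-space structure of $p$ and the openness of the sheet $s(D)$ in $B$ (needed so that $W=(U\times s(D))\cap\Sigma_\phi$ is open), whereas the paper's route is more elementary, separating openness of $\pi_1$ from local injectivity. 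Either way the conclusion that $\Sigma_\phi$ is \'etale follows since range and source on $W$ factor through the bisection $U$.
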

\begin{proof}
  Note that $\Sigma_{\phi}$ is a closed subgroupoid of $\Gamma\times
  B$ because $\phi$ and $p$ are continuous maps. Since $p$
  is surjective it follows that 
  $\Sigma_{\phi}$ is a twist by $A$ via $j(u,a)=(u,a)$ and
  $\pi(\gamma,b)=\gamma$. We make the obvious identification
  $\Sigma_{\phi}^0 = \go\times \{0_B\}\simeq \go$, where $0_B$ is the unit of $B$. If
  $\Sigma_{\phi}$ is properly isometric to the trivial twist then Lemma 
  \ref{lem:trivial} implies that there is a groupoid homomorphism
  $\tau:\Gamma\to \Sigma_{\phi}$ such that
  $\pi_1\tau=\operatorname{id}_{\Gamma}$. Therefore there is
  $\tilde{\phi}\in Z_{\Gamma}(B)$ such that
  $\tau(\gamma)=(\gamma,\tilde{\phi}(\gamma))$ and, hence,
  $\phi=p_{*}\tilde\phi$. Conversely, if $\phi=p_*\tilde{\phi}$ for
  some $\tilde{\phi}\in Z_\Gamma(B)$, then define $\tau:\Gamma\to
  \Sigma_\phi$ via $\tau(\gamma)=(\gamma,\tilde{\phi}(\gamma))$. It
  follows that $\pi_1\tau=\operatorname{id}_{\Gamma}$ and Lemma
  \ref{lem:trivial} implied that $\Sigma_\phi$ is properly isometric to the trivial twist.
    
  It is easy to check that $\pi_1$ is a groupoid morphism.  
  Moreover, $\pi_1$ is surjective since $p$ is surjective.

  Now suppose that $\Gamma$ is \'etale and $A$ is discrete. 
  Since $\pi_1: \Sigma_{\phi}\to \Gamma$ is open, in order to prove that $\pi_1$ 
  is a local homeomorphism it suffices to show that it is locally injective.
  Let $(\gamma, b)\in \Sigma_{\phi}$. 
  Then there are an open bisection $U$ in $\Gamma$ such that $\gamma \in U$
  and a neighborhood $V_0$ of $0_B$ such that $V_0\cap \ker p =\{0_B\}$. 
  Let $V$ be an open neighborhood of $b$ such that $V-V\subseteq V_0$. 
  Then $W := (U \times V) \cap \Sigma_{\phi}$ is an open neighborhood of $(\gamma, b)$ in $\Sigma_{\phi}$.
  Given $(\gamma_1, b_1), (\gamma_2, b_2) \in W$ such that $\pi_1(\gamma_1, b_1) = \pi_1(\gamma_2, b_2)$,
  then $\gamma_1 = \gamma_2$ and 
  \[
  p(b_1 - b_2) = p(b_1) - p(b_2) = \phi(\gamma_1) - \phi(\gamma_2) = 0.
  \]
  Hence 
  \[
  b_1 - b_2 \in (V - V) \cap \ker p \subset V_0\cap \ker p = \{0_B\}
  \]
  and so $b_1 = b_2$.
  Thus the restriction of $\pi_1$ to $W$ is injective and therefore a local homeomorphism.
  It follows easily that $\Sigma_{\phi}$ is \'etale.
\end{proof}

The following result generalizes an initial segment of the exact sequence given in 
\cite[Proposition 3.3]{kum:jot88}.

\begin{proposition}
  Given a short exact sequence  of locally compact  abelian groups
  \[
  0\rightarrow A\xrightarrow{i} B\xrightarrow{p} C\rightarrow 0,
  \]
  there is an exact sequence 
  \[
  0\rightarrow Z_\Gamma(A)\xrightarrow{i_{*}} Z_\Gamma(B)\xrightarrow{p_{*}}
  Z_\Gamma(C)\xrightarrow{\delta} T_\Gamma(A),
  \]
  where  $\delta(\phi):=[\Sigma_\phi]$ for  $\phi\in Z_\Gamma(C)$.
\end{proposition}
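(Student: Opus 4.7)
The plan is to unpack the exactness at each of the three positions, leaning heavily on Proposition~\ref{lem:twist_prop} for the position $Z_\Gamma(C)$.

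First, exactness at $Z_\Gamma(A)$: I would observe that since $i\colon A\to B$ is injective, composition with $i$ defines an injective homomorphism on functions, so $i_*$ is injective on cocycles. Nothing subtle happens here.

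Next, exactness at $Z_\Gamma(B)$: The inclusion $\operatorname{im} i_* \subset \ker p_*$ is immediate from $p\circ i = 0$. For the reverse, given $\tilde{\phi}\in Z_\Gamma(B)$ with $p\circ\tilde{\phi}=0$, each value $\tilde{\phi}(\gamma)$ lies in $\ker p = i(A)$, so set-theoretically we obtain $\psi := i^{-1}\circ\tilde{\phi}\colon \Gamma\to A$. The cocycle identity passes through the additive isomorphism $i\colon A\to i(A)$. To see that $\psi$ is continuous I use the standing assumption that $A, B$ are second countable locally compact Hausdorff; in a short exact sequence of such groups, $i$ is a topological embedding of $A$ onto the closed subgroup $\ker p$, so $i^{-1}\colon i(A)\to A$ is continuous. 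Hence $\psi \in Z_\Gamma(A)$ and $i_*\psi = \tilde{\phi}$.

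Finally, exactness at $Z_\Gamma(C)$: The inclusion $\operatorname{im} p_* \subset \ker\delta$ is exactly the ``converse'' direction of Proposition~\ref{lem:twist_prop}: if $\phi = p_*\tilde{\phi}$ for some $\tilde{\phi}\in Z_\Gamma(B)$, then $\Sigma_\phi$ is properly isomorphic to the trivial twist, so $\delta(\phi) = [\Sigma_\phi] = [\Gamma\times A] = 0$ in $T_\Gamma(A)$. For $\ker\delta \subset \operatorname{im} p_*$, suppose $\delta(\phi) = 0$, i.e., $\Sigma_\phi$ is properly isomorphic to the trivial twist. Applying Proposition~\ref{lem:twist_prop} produces a continuous groupoid homomorphism $\tau\colon\Gamma\to\Sigma_\phi$ with $\pi_1\tau = \operatorname{id}_\Gamma$. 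Writing $\tau(\gamma) = (\gamma,\tilde{\phi}(\gamma))$ defines a continuous map $\tilde{\phi}\colon \Gamma\to B$; the multiplicativity of $\tau$ forces $\tilde{\phi}\in Z_\Gamma(B)$, and the defining relation $\phi(\gamma) = p(\tilde{\phi}(\gamma))$ built into $\Sigma_\phi$ gives $p_*\tilde{\phi} = \phi$.

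The only nontrivial step is the continuity of $\psi$ in exactness at $Z_\Gamma(B)$, but this is handled by the standard fact that in a short exact sequence of second countable locally compact abelian groups the injection $i$ is a closed embedding; all other steps are direct consequences of the definitions and Proposition~\ref{lem:twist_prop}.
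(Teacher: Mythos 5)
Your argument is correct and follows essentially the same route as the paper: the paper disposes of exactness at $Z_\Gamma(A)$ and $Z_\Gamma(B)$ by citing the left exactness of the functor $Z_\Gamma$ (which you verify by hand, including the continuity of $i^{-1}$ on $\ker p$ via the open mapping theorem for second countable locally compact groups), and exactness at $Z_\Gamma(C)$ is in both cases exactly the equivalence recorded in Proposition~\ref{lem:twist_prop}. No gaps; you have merely made explicit what the paper treats as routine.
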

\begin{proof}
  Since $Z_\Gamma$ is left exact we only need to show that
  $\operatorname{Im}p_*=\operatorname{ker}\delta$. We have that $\phi\in \operatorname{ker}\delta$
  if and only if $\Sigma_\phi$ is properly isometric to the trivial twist. By Lemma
  \ref{lem:twist_prop}, this happens if and only if
  $\phi=p_*\tilde{\phi}$ for some $\tilde{\phi}\in Z_\Gamma(B)$.
\end{proof}

In the following example we consider the case  $A = \Z$, $B = \R$ and $C = \T$.
In a certain  case where the groupoid $\Gamma$ is equivalent to a space $X$, we indicate how 
the construction of the groupoid $\Sigma_\phi$ is related to the boundary map of \v{C}ech cohomology 
  $\check{H}^1(X, \TT)  \to \check{H}^2(X, \Z)$, where $\check{H}^1(X, \TT)$ is the first \v{C}ech cohomology 
of $X$ with coefficients in $\TT$, the sheaf of germs of continuous $\T$-valued functions, and $\check{H}^2(X, \Z)$ 
is the second \v{C}ech cohomology of $X$ with coefficients in the constant sheaf with fiber $\Z$
(see \cite[\S{4.1}]{rw:morita} for background on \v{C}ech cohomology).
This example is inspired by results in \cite{RaeRos88}.

\begin{example}\label{ex:cech}
Let $X$ be a Hausdorff second countable locally compact space; thus $X$ is also paracompact.  
Let $\mfU := \{ U_i \}_{i \in I}$ be a locally finite open cover of $X$. 
Set $U_{ij} := U_i \cap U_j$ for $i, j \in I$ and similarly 
$U_{ijk} := U_i \cap U_j \cap U_k$ for $i, j, k \in I$.
We now construct an \'etale groupoid associated to $\mfU$ 
(see \cite[\S{1}, Remark 3]{RaeTay85} and \cite[Example III.1.0]{Hae79}).  
Let $\Gamma_\mfU := \{ (x, i, j) : x \in U_{ij} \}$ and 
$\Gamma_\mfU^0 := \{ (x, i, i) : x \in U_{i} \}$;
note that $\Gamma_\mfU^0$ may be identified with the disjoint union $\bigsqcup_{i \in I} U_{i}$
and that $\Gamma_\mfU$ may be identified with the disjoint union $\bigsqcup_{i, j \in I} U_{ij}$.
It is routine to check that with the topology obtained from this identification and with 
the following structure maps $\Gamma_\mfU$ is an \'etale groupoid:
\begin{align*}
s(x, i, j) &= (x, j, j) \\
r(x, i, j) &= (x, i, i) \\
(x, i, j)^{-1} &= (x, j, i) \\
(x, i, j)(x, j, k) &= (x, i, k).
\end{align*}
Note that $\Gamma_\mfU$ may also be viewed as the natural groupoid associated to the  
local homeomorpism $\bigsqcup_{i \in I} U_i \to X$ (see \cite{Kum83}).
Next suppose that we are given a \v{C}ech 1-cocycle $\lambda = \{ \lambda_{ij} \}_{i, j \in I}$ 
with coefficients in $\TT$.  
So $\lambda_{ij} : U_{ij} \to \T$ is continuous for all $i, j \in I$ and
$\lambda_{ik}(x) = \lambda_{ij}(x)\lambda_{jk}(x)$ for all $i, j, k \in I$ and $x \in U_{ijk}$. 
It is routine to check that the map $\phi : \Gamma_\mfU \to \T$ given by
$\phi((x, i, j)) = \lambda_{ij}(x)$ is a continuous groupoid 1-cocycle.

Now suppose that each $\lambda_{ij}$ lifts, that is, for each $i, j \in I$ 
there is a continuous function $\tilde{\lambda}_{ij} : U_{ij} \to \R$ such 
that $\lambda_{ij} = e \circ \tilde{\lambda}_{ij}$ where $e(t) = e^{2{\pi}\sqrt{-1}t}$ for $t \in \R$.
We may assume that $\tilde{\lambda}_{ii} = 0$ for all $i \in I$.  
We observe that for $i, j, k \in I$ the formula
\[
(\lambda^\star)_{ijk}(x) := \tilde{\lambda}_{ij}(x) + \tilde{\lambda}_{jk}(x) - \tilde{\lambda}_{ik}(x)
\]
for  $x \in U_{ijk}$ defines a continuous integer-valued function.
A routine computation shows that for all $i, j, k, \ell \in I$ and $x \in U_{ijk\ell}$
\[
(\lambda^\star)_{jk\ell}(x) - (\lambda^\star)_{ik\ell}(x) + 
(\lambda^\star)_{ij\ell}(x) - (\lambda^\star)_{ijk}(x) = 0,
\]
that is, $(\lambda^\star)_{ijk}$ gives a \v{C}ech 2-cocycle with values in $\Z$ 
(i.e.\ the constant sheaf with fiber $\Z$).  
It is normalized in the sense that $(\lambda^\star)_{ijk} = 0$ if $j = i$ or $j = k$.
As in \cite{RaeTay85} we may construct a groupoid 2-cocycle $\phi^\star$ by the formula
\[
\phi^\star((x, i, j),(x, j, k)) = (\lambda^\star)_{ijk}(x) 
\]
for all $x \in U_{ijk}$.
We obtain a twist $\Sigma$ by $\Z$ over $\Gamma_\mfU$ determined by $\phi^\star$
(see \cite[Prop. I.1.14]{ren:groupoid}).  
Indeed, let $\Sigma := \{ (n, (x, i, j)) : n \in \Z, x \in U_{ij} \}$.  
We identify $\Gamma_\mfU^0$ with $\Sigma^0$ via the map $(x, i, i) \mapsto (0, (x, i, i))$.  
The range and source maps factor through those of $\Gamma_\mfU$. 
The remaining structure maps are given by
\begin{align*}
(m, (x, i, j))(n, (x, j, k)) &:= (m + n + (\lambda^\star)_{ijk}(x), (x, i, k)) \\
(n, (x, i, j))^{-1} &:= (-n - (\lambda^\star)_{jij}(x), (x, j, i)).
\end{align*}
We claim that $\Sigma \cong \Sigma _\phi$.  Define $\xi : \Sigma \to \Gamma_{\mfU}\times \R$ by
\[
\xi((n, (x, i, j))) = ((x, i, j), \tilde{\lambda}_{ij}(x) + n).
\]
We first note that $\xi$ induces an identification of the unit spaces. 
Given a pair of composable elements $(m, (x, i, j))$, $(n, (x, j, k)) \in \Sigma$ we have
\begin{align*}
\xi((m, (x, i, j))(n, (x, j, k))) &= \xi((m + n + (\lambda^\star)_{ijk}(x), (x, i, k))) \\
   &= ((x, i, k), \tilde{\lambda}_{ik}(x) + m + n + (\lambda^\star)_{ijk}(x)) \\
   &= ((x, i, k), m + n + \tilde{\lambda}_{ij}(x) + \tilde{\lambda}_{jk}(x)) \\
   &= ((x, i, j), \tilde{\lambda}_{ij}(x) + m) ((x,j, k), \tilde{\lambda}_{jk}(x) + n) \\
   &= \xi((m, (x, i, j)))\xi((n, (x, j, k))).
\end{align*}
Hence, $\xi$ is a groupoid homomorphism.  
Moreover $\xi$ is injective and its image coincides with $\Sigma_\phi$.  
It follows that the isomorphism class of $\Sigma$ does not depend on the specific choice 
of the $\tilde{\lambda}_{ij}$.  
\end{example}

\begin{remark}\label{iso}
With notation as in the above example any (normalized) \v{C}ech 2-cocycle with values in the 
constant sheaf  with fiber $\Z$ gives rise to a groupoid as above which is isomorphic to a pullback.
Let $\RR$ be the sheaf of germs of continuous $\R$-valued functions on $X$.
Since $\RR$ is a fine sheaf, we have $H^n(X, \RR) = 0$ for $n \ge 1$.  
Moreover since $0 \to \Z \to \RR \to \TT \to 0$ is exact, the connecting map of 
the long exact sequence of cohomology  $\partial^n : \check{H}^n(X, \TT) \to \check{H}^{n+1}(X, \Z)$
is an isomorphism for $n >0$ (see \cite[Theorem 4.37]{rw:morita}).   
In Example \ref{ex:cech}  we have $[\lambda^\star] = \partial^1([\lambda])$.
\end{remark}

\begin{remark}
Note that every \v{C}ech 2-cocycle with values in an arbitrary sheaf of 
abelian groups is cohomologous to a normalized cocycle $a = \{ a_{ijk} \}_{ijk}$ 
(that is, $a_{ijk} = 0$ if $j = i$ or $j = k$).
Indeed given a \v{C}ech 2-cocycle $c = \{ c_{ijk} \}_{ijk}$ a routine calculation shows that
$c_{iij}(x) = c_{iii}(x) = c_{jii}(x)$ for all $i, j \in I$ and $x \in U_{ij}$.
For $i, j \in I$, define a 1-cochain $\lambda$ by $\lambda_{ij} = 0$ if $i \ne j$ and $\lambda_{ii}(x) = c_{iii}(x)$ 
for $x \in U_{ii}$.  Then $a := c - d^1\lambda$ is a normalized 2-cocycle cohomologous to $c$.
(Recall that $(d^1\lambda)_{ijk}(x) := \lambda_{jk}(x) - \lambda_{ik}(x) + \lambda_{ij}(x)$ for $x \in U_{ijk}$). 
\end{remark}

\section{The structure of the $C^*$-algebra $C^*(\Sigma_\phi)$}
\label{sec:struct}

Let $\Gamma$ be a locally compact Hausdorff groupoid endowed with Haar system
$\{\hsg^u\}_{u\in\go}$ and let
 \[
  0\rightarrow A\xrightarrow{i} B\xrightarrow{p} C\rightarrow 0,
 \]
be a short exact sequence of locally compact abelian groups. In the sequel, we identify $A$ with
its image $i(A)$ in $B$.  Let $\phi\in Z_\Gamma(C)$ and let
$\Sigma_\phi$ be the obstruction twist as in \eqref{eq:pullback}. 
We prove below that  $C^*(\Sigma_\phi)\cong
\ind_{\widehat{C}}^{\widehat{B}}C^*(\Gamma)$ if $\Gamma$ is amenable (see Theorem \ref{thm:main}).
We begin by describing the action of $\widehat{C}$ on $C^*(\Gamma)$.
The following lemma follows immediately from Proposition II.5.1(iii) of \cite{ren:groupoid}.

\begin{lemma}\label{lem:action}
Assume that $C$ is a locally compact abelian group. Given $\phi\in
Z_\Gamma(C)$, the map
\[
\alpha^{\phi}_t(f)(\gamma)=\langle t,\phi(\gamma)\rangle f(\gamma),
\]
for $f\in C_c(\Gamma)$, $t\in\widehat{C}$, and $\gamma\in\Gamma$,
defines  a strongly continuous action
$\alpha^{\phi} :\widehat{C}\to \Aut C^*(\Gamma)$.  
\end{lemma}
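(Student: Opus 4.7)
The plan is to observe that for each fixed $t\in\widehat{C}$, the composition $\chi_t:=\langle t,\phi(\cdot)\rangle:\Gamma\to\T$ is a continuous $\T$-valued groupoid $1$-cocycle. Indeed, $\phi$ is a continuous cocycle by hypothesis and the pairing $\langle t,\cdot\rangle:C\to\T$ is a continuous group homomorphism, so $\chi_t(\gamma_1\gamma_2)=\langle t,\phi(\gamma_1)+\phi(\gamma_2)\rangle=\chi_t(\gamma_1)\chi_t(\gamma_2)$. Renault's Proposition II.5.1(iii) then produces an automorphism $\alpha^\phi_t$ of $C^*(\Gamma)$ acting on $C_c(\Gamma)$ by pointwise multiplication by $\chi_t$, which is exactly the formula in the statement.

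Next I would check the group-homomorphism property $\alpha^\phi_{s+t}=\alpha^\phi_s\circ\alpha^\phi_t$, which is immediate from the multiplicativity of the pairing: for $f\in C_c(\Gamma)$ and $\gamma\in\Gamma$,
\[
\alpha^\phi_{s+t}(f)(\gamma)=\langle s+t,\phi(\gamma)\rangle f(\gamma)=\langle s,\phi(\gamma)\rangle\langle t,\phi(\gamma)\rangle f(\gamma)=\alpha^\phi_s(\alpha^\phi_t(f))(\gamma).
\]
The identity $\alpha^\phi_0=\operatorname{id}$ is equally clear.

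The substantive step is strong continuity. I would argue that $t\mapsto\alpha^\phi_t(f)$ is continuous for every $f\in C_c(\Gamma)$, first in the inductive limit topology on $C_c(\Gamma)$, and then in $C^*$-norm. Fix $f\in C_c(\Gamma)$ with compact support $K$; then $\phi(K)\subset C$ is compact, and a basic property of the Pontryagin pairing is that $\widehat{C}\times C\to\T$ is jointly continuous, hence $\langle\cdot,\cdot\rangle$ is uniformly continuous on $\widehat{C}\times\phi(K)$ in the first variable uniformly over $\phi(K)$. Thus if $t_n\to t$ in $\widehat{C}$, then $\chi_{t_n}\to\chi_t$ uniformly on $K$, which gives $\alpha^\phi_{t_n}(f)\to\alpha^\phi_t(f)$ uniformly with supports all contained in $K$, i.e.\ in the inductive limit topology on $C_c(\Gamma)$. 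Since the universal norm is dominated by (a constant times) the $I$-norm on $C_c(\Gamma)$, which is in turn controlled by the uniform norm times bounds on $\lambda^u(K)$ over $u\in s(K)\cup r(K)$, convergence in the inductive limit topology implies convergence in the $C^*$-norm on $C_c(\Gamma)$. A standard $3\varepsilon$ argument using density of $C_c(\Gamma)$ in $C^*(\Gamma)$ and the uniform bound $\|\alpha^\phi_t\|=1$ extends strong continuity to all of $C^*(\Gamma)$.

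The only delicate point is the last passage from the inductive limit topology to the $C^*$-norm, and the joint continuity of the Pontryagin pairing on compact subsets; both are standard, so I expect no real obstacle beyond a careful bookkeeping of support sets.
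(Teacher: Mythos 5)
Your proposal is correct and follows essentially the same route as the paper, which simply invokes Renault's Proposition II.5.1(iii): that proposition already yields the strongly continuous dual action $\alpha^{\phi}$ associated to a continuous cocycle with values in a locally compact abelian group. Your additional verifications (the homomorphism property, and strong continuity via uniform convergence of characters on the compact set $\phi(K)$, the $I$-norm bound, and density of $C_c(\Gamma)$) are exactly the standard details behind that citation and contain no gaps.
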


We define next a Haar system on $\Sigma_\phi$. 
Choose Haar measures  $\mu_A$,
$\mu_B$, and $\mu_C$ on $A$, $B$, and, respectively $C$ such that:
\[
\int_B f(b)\,d\mu_B(b)=\int_C \int_A f(b+a)\,d\mu_A(a)d\mu_C(p(b)).
\]
One can always make such a choice since $A$, $B$, and $C$ are locally compact abelian
groups (see,  e.g. \cite[page 79]{DeiEct14}).
Let $\pi_1:\Sigma_\phi\to \Gamma$ be the projection map (see Lemma \ref{lem:twist_prop}).
Note that $\pi_1^{-1}(\gamma) = \{ \gamma \} \times A_\gamma$ where
$A_\gamma := \{  b\in B : \phi(\gamma)=p(b) \}$
is a coset in $B$, since $A_\gamma = b_{\gamma}+A$ for some $b_\gamma \in B$ with
$\phi(\gamma)=p(b_\gamma)$. 
We write $\mu_\gamma$ for the measure defined on $A_\gamma$ via
\[
\int_{A_\gamma} f(\gamma,b)\,d\mu_\gamma(b):=\int_Af(\gamma, b_\gamma+a)\,d\mu_A(a).
\]
The measure $\mu_\gamma$ is independent of the choice of $b_\gamma$
because $\mu_A$ is a Haar measure on $A$.
We define now a Haar system
$\{\hsi^u\}_{u\in\go}$ on $\Sigma_\phi$ via
\begin{equation}
  \label{eq:HaarSyst}
  \int_{\Sigma_\phi^u}f(\gamma,b)\,d\hsi^u(\gamma,b)=\int_{\Gamma^u}\int_{A_\gamma}
  f(\gamma,b)\,d\mu_\gamma(b)d\hsg^u(\gamma), 
\end{equation}
for all $u\in\go$. It is easy to check that \eqref{eq:HaarSyst} defines
 a Haar system on $\Sigma_\phi$ using the fact that
$\{\hsg^u\}$ is a Haar system on $\Gamma$ and $\mu_A$ is a Haar
measure on $A$.

The main goal of this section is to prove that $C^*(\Sigma_\phi)$
is isomorphic to the induced algebra $\ind_{\hat{C}}^{\hat{B}}
(C^*(\Gamma),\alpha^\phi)$ using Theorem \ref{thm:indalg}. 
We begin by defining a map $\rho:C_c(\Sigma_\phi)\to C_c(\Gamma)$ via
\begin{equation}
  \label{eq:rhomap}
  \rho(f)(\gamma)=\int_{A_\gamma} f(\gamma,b)\,d\mu_\gamma(b).
\end{equation}
\begin{lemma}\label{lem:rho}
  With notation as above, the map $\rho$ defined in \eqref{eq:rhomap} extends to  a surjective  
  $*$-homomorphism $\rho:C^*(\Sigma_\phi)\to C^*(\Gamma)$ which factors through the map 
  $\iota: C^*(\Sigma_\phi)\to M(C^*(\Gamma \times B))$ given in Subsection \ref{subsec:mult-algebr-group}. 
  Moreover, if either $\Gamma$ is amenable or $\Sigma_\phi$ is a clopen subset of $B \times\Gamma$,
  then $\iota$ is injective.
\end{lemma}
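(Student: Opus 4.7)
My plan is to verify the $*$-homomorphism identity for $\rho$ on $C_c$, then obtain the bounded extension and the factorization through $\iota$ by way of an auxiliary ``integration over $B$'' map, then handle surjectivity directly, and finally appeal to the results of Subsection~\ref{subsec:mult-algebr-group} for the injectivity of $\iota$. For the first step I would compute $\rho(f*g)(\gamma_1)$ by unfolding the convolution using the Haar system \eqref{eq:HaarSyst}, then apply Fubini and perform the substitution $b_1 \mapsto b + b_1$ within each coset $A_{\gamma_1}$. Translation-invariance of $\mu_A$ makes this substitution measure-preserving on each fiber and reduces the expression to $\int_\Gamma \rho(f)(\gamma')\rho(g)(\gamma'^{-1}\gamma_1)\,d\hsg^{r(\gamma_1)}(\gamma')$; the adjoint identity $\rho(f^*) = \rho(f)^*$ is a shorter one-line check using the symmetry $b \mapsto -b$ together with the fact that $A_{\gamma^{-1}} = -A_\gamma$.

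To establish boundedness and the factorization, I would introduce the auxiliary $*$-homomorphism $\pi : C^*(\Gamma \times B) \to C^*(\Gamma)$ defined on $C_c$ by $\pi(F)(\gamma) := \int_B F(\gamma, b)\, d\mu_B(b)$. Under the identification $C^*(\Gamma \times B) \cong C^*(\Gamma) \otimes C^*(B) \cong C^*(\Gamma) \otimes C_0(\widehat B)$, this is the slice map $\mathrm{id} \otimes \epsilon_0$, where $\epsilon_0$ is the character of $C_0(\widehat B)$ given by evaluation at $0 \in \widehat B$; hence $\pi$ is bounded, surjective and nondegenerate, and extends uniquely to $\tilde\pi : M(C^*(\Gamma \times B)) \to M(C^*(\Gamma))$. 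A direct computation on an elementary tensor $g \otimes h \in C_c(\Gamma) \odot C_c(B)$ then gives
\[
\pi\bigl(\iota(a)(g \otimes h)\bigr) \;=\; \rho(a) * \pi(g \otimes h)
\]
for $a \in C_c(\Sigma_\phi)$, which identifies $\tilde\pi(\iota(a))$ with left convolution by $\rho(a) \in C_c(\Gamma) \subset C^*(\Gamma)$. Since $\iota$ and $\tilde\pi$ are $C^*$-bounded, the identity $\rho = \tilde\pi \circ \iota$, already verified on the dense subalgebra $C_c(\Sigma_\phi)$, extends continuously to all of $C^*(\Sigma_\phi)$ with image contained in $C^*(\Gamma) \subset M(C^*(\Gamma))$.

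For surjectivity I would argue directly on $C_c(\Gamma)$: given $g \in C_c(\Gamma)$ with compact support $K$, I choose $\psi \in C_c(B)$ so that the $A$-average $\bar\psi(b) := \int_A \psi(b+a)\, d\mu_A(a)$, which is $A$-invariant and therefore descends to a continuous function on $C$, equals $1$ on $\phi(K) \subset C$; such a $\psi$ exists by a standard Bruhat-function argument using the surjectivity of $p$. Then $f(\gamma, b) := g(\gamma)\psi(b)$ restricts to a compactly supported continuous function on $\Sigma_\phi$ with $\rho(f) = g$, so $C_c(\Gamma) \subseteq \rho(C_c(\Sigma_\phi))$; density of $C_c(\Gamma)$ in $C^*(\Gamma)$ together with closedness of the image of a $*$-homomorphism of $C^*$-algebras gives surjectivity of $\rho$ onto $C^*(\Gamma)$. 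Finally, for the injectivity of $\iota$: in the clopen case Proposition~\ref{prop:discretecase} applies directly to $\Sigma_\phi \subset \Gamma \times B$; in the amenable case the direct-product groupoid $\Gamma \times B$ is amenable (since $\Gamma$ is amenable by hypothesis, the abelian group $B$ is amenable, and amenability is preserved under direct products of groupoids), so Proposition~\ref{prop:amenablecase} applies. The most delicate step is the factorization through $\iota$: identifying the multiplier-algebra extension $\tilde\pi$ and matching $\tilde\pi \circ \iota$ with $\rho$ requires careful bookkeeping between the tensor product decomposition of $C^*(\Gamma \times B)$ and the subgroupoid embedding; the remaining calculations are routine unfoldings.
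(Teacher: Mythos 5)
Your proposal is correct and follows essentially the same route as the paper: a direct $C_c$-level verification that $\rho$ is a $*$-homomorphism, factorization $\rho=\tilde\pi\circ\iota$ through the map of Subsection~\ref{subsec:mult-algebr-group} with $\tilde\pi$ being evaluation at the trivial character of $\widehat{B}$ (the paper phrases this via $M(C^*(\Gamma\times B))\cong C_b(\widehat{B},M(C^*(\Gamma)))$, you via the slice map $\mathrm{id}\otimes\epsilon_0$ and its multiplier extension), a Bruhat cross-section argument for surjectivity (yours with an extra cut-off so that $\psi\in C_c(B)$, the paper using the approximate cross-section directly), and Propositions~\ref{prop:discretecase} and~\ref{prop:amenablecase} together with amenability of $\Gamma\times B$ for injectivity of $\iota$. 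These are only cosmetic variations, so no further comparison is needed.
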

\begin{proof}
  We prove first that $\rho$ is a $*$-homomorphism. Let $f$ and $g$
  in $C_c(\Sigma_\phi)$ and let $\gamma\in 
  \Gamma$. Then, using the fact that $\mu_A$ is a Haar measure on $A$,
  we obtain 
  \begin{align*}
    \rho(f*g)(\gamma)&=\int_{A_\gamma}(f*g)(\gamma,b)\,d\mu_\gamma(b)\\
                     &=\int_{A_\gamma}\!
                       \int_{\Gamma^{r(\gamma)}}\!\int_{A_\alpha}f(\alpha,b^\prime)g(\alpha^{-1}\gamma,-b^\prime+b)\,d\mu_\alpha(b^\prime)d\hsg^{r(\gamma)}(\alpha)d\mu_\gamma(b)\\
                     &=\int_{\Gamma^{r(\gamma)}}\!\int_{A_\alpha}f(\alpha,b^\prime)\int_{A_\gamma}g(\alpha^{-1}\gamma,-b^\prime+b)\,d\mu_\gamma(b)d\mu_\alpha(b^\prime)d\hsg^{r(\gamma)}(\alpha)\\
                     &=\int_{\Gamma^{r(\gamma)}}\rho(f)(\alpha)\rho(g)(\alpha^{-1}\gamma)\,d\hsg^{r(\gamma)}(\alpha)\\
                     &=\bigl(\rho(f)*\rho(g)\bigr)(\gamma).
  \end{align*}
  It is easy to show that $\rho(f^*)=\rho(f)^*$ for all $f\in C_c(\Sigma_\phi)$.

  Recall from Subsection \ref{subsec:mult-algebr-group} (see
  \cite[Proposition II.2.4]{ren:groupoid})  that there is  a bounded
  $*$-homomorphism from $C^*(\Sigma_\phi)$ into $M(C^*(\Gamma\times B))$. Since
  $C^*(\Gamma\times B)$ is $*$-isomorphic to $C^*(\Gamma)\otimes
  C^*(B)$ and $C^*(B)$ is isomorphic to $C_0(\hat{B})$ it follows that
  $M(C^*(\Gamma\times B))$ is $*$-isomorphic to $C_b(\hat{B},M(C^*(\Gamma)))$ 
  where $M(C^*(\Gamma))$ is given the strict topology
  (see \cite[Corollary 3.4]{AkePedTom73}). 
  Then $\rho$ is just the composition
  of the above bounded $*$-homomorphisms with evaluation at $0$. Hence $\rho$
  extends to a bounded $*$-homomorphism $\rho:C^*(\Sigma_\phi)\to
  M(C^*(\Gamma))$. However, since $\rho(C_c(\Sigma_\phi))\subseteq
  C_c(\Gamma)$,  $C_c(\Sigma_\phi)$ is dense in $C^*(\Sigma_\phi)$,
  and $C_c(\Gamma)$ is dense in $C^*(\Gamma)$, it follows that $\rho$
  is a bounded $*$-homomorphism from $C^*(\Sigma_\phi)$ into
  $C^*(\Gamma)$.

  To prove  that $\rho$ is surjective let $\mfb$ be a
  \emph{Bruhat approximate cross-section} for $B$ over $A$. Recall
  (see, for example, Proposition C.1 of \cite{rw:morita}) that
  $\mfb:B\to[0,\infty)$ is a continuous function such that
  $\operatorname{supp}\mfb\cap (K+A)$ is compact for every compact set
  $K$ in $B$, and such that $\int_A\mfb(b+a)d\mu_A(a)=1$ for all $b\in
  B$. Given $g\in C_c(\Gamma)$ define $f\in C_c(\Sigma_\phi)$ via
  $f(\gamma,b)=g(\gamma)\mfb(b)$. It follows immediately that
  $\rho(f)=g$ and, hence, $\rho$ is surjective.
 
  The final assertion follows from Proposition \ref{prop:discretecase}
  if $\Sigma_\phi$ is a clopen subset of $\Gamma \times B$. 
  If $\Gamma$ is amenable the assertion follows from Proposition
  \ref{prop:amenablecase} and the fact that  $\Gamma \times B$ is
  amenable (see \cite[Prop. 5.1.2]{anaren:amenable00}). 
\end{proof}
We are now ready to state and prove the main result of the paper.
\begin{theorem}\label{thm:main}
  Let $\Gamma$ be a locally compact Hausdorff amenable groupoid endowed with Haar system
$\{\hsg^u\}_{u\in\go}$ and let  
  \[
  0\rightarrow A\xrightarrow{i} B\xrightarrow{p} C\rightarrow 0,
 \]
  be a short exact sequence of locally compact abelian groups. 
  Let $\phi\in Z_\Gamma(C)$ and let
  $\Sigma_\phi$ be the obstruction twist as in \eqref{eq:pullback} endowed with the
  Haar system defined in \eqref{eq:HaarSyst}. Then there are a
  surjective  $*$-homomorphism $\rho:C^*(\Sigma_\phi)\to C^*(\Gamma)$,  a strongly
  continuous action $\gamma:\hat{B}\to \Aut(C^*(\Sigma_\phi))$ 
  and a $\hat{B}$-equivariant homomorphism
  $i:C_0(\hat{B}/\hat{C})\to Z(M(C^*(\Sigma_\phi)))$ such that the four
  conditions of Theorem \ref{thm:indalg} are satisfied.
  Therefore there is a unique $\hat{B}$-equivariant isomorphism
  $\Psi :C^*(\Sigma_\phi)\to\ind_{\hat{C}}^{\hat{B}}C^*(\Gamma)$
  defined via
  \[
  \Psi(f)(t)=\gamma_{-t}(f)
  \] for $f\in C_c(\Sigma_\phi)$ and $t\in \hat{B}$.
\end{theorem}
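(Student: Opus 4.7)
The plan is to deduce the theorem from Theorem~\ref{thm:indalg} applied with $G = \widehat{B}$, $H = \widehat{C}$ (embedded as a closed subgroup of $\widehat{B}$ via the dual of $p$), $D = C^*(\Gamma)$, $E = C^*(\Sigma_\phi)$, and $\alpha = \alpha^\phi$ from Lemma~\ref{lem:action}. Pontryagin duality identifies $\widehat{B}/\widehat{C}$ with $\widehat{A}$, so $C_0(\widehat{B}/\widehat{C}) \cong C^*(A)$, and the task is to produce $\rho$, $\gamma$, and a nondegenerate central homomorphism $i$ satisfying the four compatibility conditions of that theorem.

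The map $\rho$ is supplied directly by Lemma~\ref{lem:rho} (this is where amenability of $\Gamma$ enters, via the injectivity half of that lemma). The action $\gamma$ is obtained by applying Lemma~\ref{lem:action} to the tautological $B$-valued cocycle $\tilde{\phi}(\gamma, b) := b$ on $\Sigma_\phi$, yielding $\gamma_t(f)(\gamma, b) = \langle t, b \rangle f(\gamma, b)$ for $t \in \widehat{B}$ and $f \in C_c(\Sigma_\phi)$. For $i$, I would invoke Remark~\ref{rem:dual}, which reduces the construction to producing a strictly continuous unitary representation $u : A \to UM(C^*(\Sigma_\phi))$ with $u_0 = 1$; the central inclusion $j : \go \times A \hookrightarrow \Sigma_\phi$ from the twist supplies one, concretely via $(u_a f)(\gamma, b) := f(\gamma, b - a)$, and the centrality of each $u_a$ as a multiplier follows from the defining identity $j(r(\sigma), a)\sigma = \sigma j(s(\sigma), a)$ of the twist.

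Condition~(ii) is the most transparent: for $t \in \widehat{C}$, viewed as a character of $B$ via $p$, one has $\langle t, b \rangle = \langle t, p(b) \rangle = \langle t, \phi(\gamma) \rangle$ for every $(\gamma, b) \in \Sigma_\phi$, so this phase is constant on the fibre $A_\gamma$ and pulls out of the integral defining $\rho$, producing $\alpha^\phi_t(\rho(f))$. Condition~(i) is a Fubini computation that uses the translation-invariance of $\mu_\gamma$ under $A$: convolution with an element of $C^*(A)$ multiplies $\rho(f)$ by the scalar $\int_A \widehat{k}(a)\, d\mu_A(a) = k(\widehat{C})$. For condition~(iii), after fixing $b_\gamma \in A_\gamma$ and writing $b = b_\gamma + a$, the quantity $\rho(\gamma_t f)(\gamma)$ becomes $\langle t, b_\gamma \rangle$ times the Fourier transform on $A$ of $a \mapsto f(\gamma, b_\gamma + a)$; since $t|_A$ ranges over all of $\widehat{A}$ as $t$ varies over $\widehat{B}$, Fourier inversion on $A$ forces $f = 0$.

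The main obstacle will be condition~(iv), the vanishing of $t + \widehat{C} \mapsto \|\rho(\gamma_t f)\|_{C^*(\Gamma)}$ at infinity in $\widehat{B}/\widehat{C} \cong \widehat{A}$. Pointwise in $\gamma \in \Gamma$ this is the Riemann--Lebesgue lemma for the Fourier transform on $A$ applied to the computation above, but one needs control in the $C^*$-norm of $C^*(\Gamma)$, not merely pointwise. I would first verify (iv) on a dense class of elementary functions of the form $(\gamma, b) \mapsto g(\gamma) h(b) \mathfrak{b}(b)$ with $g \in C_c(\Gamma)$, $h \in C_c(B)$, and $\mathfrak{b}$ a Bruhat cross-section as in the proof of Lemma~\ref{lem:rho}; for such $f$, $\rho(\gamma_t f)$ factors as $g$ times a scalar Fourier transform of $h\mathfrak{b}$ along the coset $A_\gamma$ and admits a decay estimate in $t|_A$ that is uniform in $\gamma$ over the compact support of $g$. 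A standard three-$\varepsilon$ approximation argument, combined with equivariance, then propagates the decay to arbitrary $f \in C^*(\Sigma_\phi)$. Once all four conditions are in hand, Theorem~\ref{thm:indalg} produces the unique $\widehat{B}$-equivariant isomorphism $\Psi$, and the displayed formula is read off from the construction in that theorem's proof.
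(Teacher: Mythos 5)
Your overall strategy coincides with the paper's: apply Theorem~\ref{thm:indalg} with $G=\widehat{B}$, $H=\widehat{C}$, $D=C^*(\Gamma)$, $E=C^*(\Sigma_\phi)$, take $\rho$ from Lemma~\ref{lem:rho}, let $\gamma$ be the pointwise multiplication action by characters of $B$, and build $i$ from the central copy of $A$ via Remark~\ref{rem:dual}; your verifications of (i) and (ii) are essentially the paper's. The genuine gap is in condition (iii). Your argument fixes $b_\gamma$, views $\rho(\gamma_t f)(\gamma)$ as $\langle t,b_\gamma\rangle$ times a Fourier transform over $A$, and invokes Fourier inversion; this only makes sense when $f$ is a function, i.e.\ for $f\in C_c(\Sigma_\phi)$ (or an $L^1$-type completion). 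Condition (iii) of Theorem~\ref{thm:indalg} quantifies over \emph{all} $e\in E=C^*(\Sigma_\phi)$, and injectivity statements cannot be promoted from a dense $*$-subalgebra to the $C^*$-completion (this is exactly the phenomenon recalled in Subsection~\ref{subsec:mult-algebr-group}: the map $j$ of \eqref{eq:embedding} can fail to be faithful for nonamenable groups even though it is injective on $C_c$). Indeed, if your pointwise argument sufficed, amenability would play no role in (iii), whereas it is precisely here that the paper needs it: one factors $e\mapsto(t\mapsto\rho(\gamma_t(e)))$ through the embedding $\iota:C^*(\Sigma_\phi)\to M(C^*(\Gamma\times B))\cong C_b(\widehat{B},M(C^*(\Gamma)))$, uses that $\iota$ is $\widehat{B}$-equivariant, and deduces (iii) from the injectivity of $\iota$, which is where Lemma~\ref{lem:rho} (via Proposition~\ref{prop:amenablecase} and amenability of $\Gamma\times B$) is invoked. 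You cite the injectivity half of Lemma~\ref{lem:rho} only in connection with constructing $\rho$, where it is not needed, and never use it where it is indispensable.

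Two smaller remarks. For condition (iv) your Riemann--Lebesgue plus three-$\varepsilon$ plan is workable (a vanishing-at-infinity bound, unlike injectivity, does pass to norm limits), but it is heavier than necessary: the paper gets (iv) for free from Remarks~\ref{rem:nondeg} and~\ref{rem:dual}, since nondegeneracy of $i$ reduces to $u_0=1_{M(C^*(\Sigma_\phi))}$, and then (iv) follows from (i) and equivariance by cutting with an approximate unit of $C_0(\widehat{B}/\widehat{C})$. Finally, you assert that each $u_a$ is a central unitary multiplier; the paper verifies adjointability of $i(h)$ on the Hilbert module $C^*(\Sigma_\phi)_{C^*(\Sigma_\phi)}$ by an explicit computation before concluding centrality from the twist identity, and some such verification should be recorded rather than assumed.
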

\begin{proof}
  The existence of the map $\rho:C^*(\Sigma_\phi)\to
  C^*(\Gamma)$ is proved in Lemma \ref{lem:rho}. The strongly
  continuous action $\gamma:\hat{B}\to \Aut(C^*(\Sigma_\phi))$ is
  given by
    \begin{equation*}
    \gamma_t(f)(\sigma,b)=\langle t,b\rangle f(\sigma,b).
  \end{equation*}
  In order to define the map
  $i:C_0(\hat{B}/\hat{C})\to Z(M(C^*(\Sigma_\phi)))$ we identify
  $\hat{B}/\hat{C}$ with $\hat{A}$ and we 
  also identify $C_0(\hat{A})$ with $C^*(A)$ via the Fourier
  transform. Recall that we view the multiplier algebra
  $M(C^*(\Sigma_\phi))$  as the $C^*$-algebra of the adjointable
  operators on the Hilbert $C^*$-module
  $C^*(\Sigma_\phi)_{C^*(\Sigma_\phi)}$. Then we define $i:C^*(A) \to
  Z(M(C^*(\Sigma_\phi)))$ via
  \begin{equation}\label{eq:i}
    (i(h)f)(\sigma)=\int_A h(a)f((r(\sigma),-a)\sigma)\,d\mu_A(a)
  \end{equation}
  for all $\sigma\in \Sigma_\phi$,  $h\in C_c(A)$ and 
  $f\in C_c(\Sigma_\phi)$. A straightforward but tedious
  computation shows that $i(h)$ is an adjointable operator  on
  $C^*(\Sigma_\phi)_{C^*(\Sigma_\phi)}$: let $f$ and $g$ be in
  $C_c(A)$ and let $\sigma=(\gamma,b)\in \Sigma_\phi$. Then
  \begin{align*}
    \langle i(h)f\,,\,g\rangle(\gamma,b)&=(i(h)f)^**g(\gamma,b)\\
                                        &=\int_{\Sigma_\phi^{r(\gamma)}}(i(h)f)^*(\eta,b^\prime)g(\eta^{-1}\gamma,-b^{\prime}+b)\,d\nu^{r(\gamma)}(\eta,b^{\prime})\\
                                        &=\int_{\Gamma^{r(\gamma)}}\int_{A_\gamma}\overline{i(h)f(\eta^{-1},-b^\prime)}g(\eta^{-1}\gamma,-b^{\prime}+b)\,d\mu_{\gamma}(b^\prime)d\lambda^{r(\gamma)}(\eta)\\
    \intertext{which using \eqref{eq:i}, interchanging the integrals
    on $A$, using the fact that $\mu_A$ is a Haar measure on $A$, and
    interchanging back the integrals, equals}\\
                                        &\int_{\Gamma^{r(\gamma)}}\int_{A_\gamma}\overline{f(\eta^{-1},-b^\prime)}(i(h^*)g)(\eta^{-1}\gamma,-b^{\prime}+b)\,\,d\mu_{\gamma}(b^\prime)d\lambda^{r(\gamma)}(\eta)\\
    &=\langle f,i(h^*)(g)\rangle (\gamma,b).
  \end{align*}
  Hence $i(h)$ is an adjointable operator on
  $C^*(\Sigma_\phi)_{C^*(\Sigma_\phi)}$. Moreover, $i(h)$ belongs to
  $Z(M(C^*(\Sigma_\phi)))$ since $\Sigma_\phi$ is a twist. 
  Note that as in Remark \ref{rem:dual} $i$ is determined by the map $u : A \to UM(C^*(\Sigma_\phi))$ 
  given as the composition 
  \[
  A \to M(C^*(A) \otimes C_0(\Gamma^0)) \to M(C^*(\Sigma_\phi)).
  \]
  Then condition (i) of Theorem \ref{thm:indalg} follows from the fact
  that $\rho(u_{a}f) = \rho(f)$ for all $a \in A$ and  
  $f \in C_c(\Sigma_\phi)$.
  The nondegeneracy of $i$ (see Remark \ref{rem:nondeg}) follows from the
  fact that $u_0 =  1_{M(C^*(\Sigma_\phi))}$. 
  Hence condition (iv) of Theorem \ref{thm:indalg} holds.  Condition
  (ii) follows by a straightforward computation. 
  Since $B \times \Gamma$ is amenable, it follows by Lemma \ref{lem:rho} that 
  \[
  \iota: C^*(\Sigma_\phi)\to M(C^*(\Gamma \times B)) \cong C_b(\hat{B},M(C^*(\Gamma)))
  \] 
  is injective.
  Note that $\iota$ is $\hat{B}$-equivariant with respect to natural actions.  
  Hence, if $\rho(\gamma_\chi(f)) = 0$ for all $\chi \in \hat{B}$, we
  have $f  = 0$ and so condition (iii) holds. 
\end{proof}

\begin{remark}\label{rem:dual2}
As noted in  the proof of the above theorem (see Remark \ref{rem:dual}) one can replace conditions (i) and (iv) of  Theorem \ref{thm:indalg} by the following two conditions on the corresponding map
$u : A \to UM(C^*(\Sigma_\phi))$:
condition (i) may be replaced by the requirement that $\rho(u_{a}f) = \rho(f)$ for all $a \in A$ and $f \in C^*(\Sigma_\phi)$ and condition (iv) may be replaced by the requirement that 
$u_0 = 1_{M(C^*(\Sigma_\phi))}$.
\end{remark}

\begin{remark}\label{clopen}
  Theorem \ref{thm:main} also holds if one replaces the requirement that
$\Gamma$ be amenable by the requirement that  
$\Sigma_\phi$  be a clopen subset of $\Gamma \times B$ (see Lemma \ref{lem:rho}).
\end{remark}

\begin{example}\label{ex:cech2}
With notation as in Example~\ref{ex:cech}, we first observe that $C^*(\Gamma_\mfU)$ is a continuous 
trace algebra with $\Prim C^*(\Gamma_\mfU) = X$ and trivial
Dixmier-Douady invariant $\delta(C^*(\Gamma))=0 $, that is, 
it is strong Morita equivalent to $C_0(X)$ (see \cite{Kum83}).
Next we let $\alpha : \Z \to \Aut C^*(\Gamma_\mfU)$ be the action determined by the 
$\T$-valued  groupoid 1-cocycle $\phi$ defined by the \v{C}ech 1-cocyle $\lambda_{ij}$
(see \cite[Proposition II.5.1]{ren:groupoid}); 
so 
\[
(\alpha_n f)(x, i, j) := \lambda_{ij}(x)^nf(x, i, j)
\]
for all $f \in C_c(\Gamma_\mfU)$, $(x, i, j) \in \Gamma_\mfU$ and $n \in \Z$.
It is straightforward to see that $\alpha$ fixes every ideal in $C^*(\Gamma_\mfU)$.

For each $i \in I$ the ideal $J_i$ in  $C^*(\Gamma_\mfU)$ corresponding to $U_i \sub X$ 
may be identified with $C^*((\Gamma_\mfU)_{V_i})$ where 
$V_i := \{ (x, j, j) : x \in U_{ij} \} \sub \Gamma_\mfU^0$ (hence $V_i
\cong \bigsqcup_{j \in I} U_{ij}$).
Note that $(\Gamma_\mfU)_{V_i} = \{ (x, j, k) : x \in U_{ijk} \}$; let 
$w_i : (\Gamma_\mfU)_{V_i}  \to \mathbb{R}$ be defined by
\[
w_i(x, j, k) := 
\begin{cases}
\lambda_{ji}(x) & \text{if } j = k, \\
0 & \text{ otherwise.}
\end{cases}
\]
Then $w_i$ may be identified with a multiplicative unitary for the ideal $C^*((\Gamma_\mfU)_{V_i})$
and $\alpha_1 |_{J_i} = \Ad w_i$.  Hence, $\alpha$ is locally unitary.
A quick calculation shows that $\lambda_{ij}w_i(x, k, k) = w_j(x, k, k)$ for all $x \in U_{ijk}$.
Therefore the Phillips-Raeburn obstruction $\eta(\alpha)$ is the class 
$[\lambda] \in \check{H}^1(X, \TT)$, the first \v{C}ech cohomology of $X$ with coefficients in $\TT$, 
the sheaf of continuous $\T$-valued functions on $X$ (see \cite[{\S}2.10]{PhiRae80}).
Using the usual isomorphism $\check{H}^1(X, \TT) \cong \check{H}^2(X, \Z)$ we identify $\eta(\alpha)$
with $[\lambda^\star] \in \check{H}^2(X, \Z)$.  
Note that the element $\eta(\alpha)$ may also be identified with the class of 
$\Prim C^*(\Gamma_\mfU) \rtimes_\alpha \Z$ regarded as a circle bundle 
under the dual action of $\T$ (see \cite[p.~224]{PhiRae84}).

By Theorem \ref{thm:main} we have 
\[
  C^*(\Sigma_\phi) \cong \ind_{\Z}^{\R}(C^*(\Gamma), \alpha).
\]
Next we observe that $C^*(\Sigma_\phi)$ is a continuous trace algebra (see the remark preceding
\cite[Lemma 3.3]{RaeRos88}) and that $\Prim C^*(\Sigma_\phi) \cong \T \times X$ 
(see the proof of \cite[Proposition 3.4]{RaeRos88}).
Since $\delta(C^*(\Gamma)) = 0$, it follows by  \cite[Corollary 3.5]{RaeRos88} that 
\[
\delta(C^*(\Sigma_\phi)) = z \times \eta(\alpha) = z \times [\lambda^\star] 
\in \check{H}^3(\T \times X, \Z)
\] 
where $z$ is the standard generator of $\check{H}^1(\T, \Z)$.
\end{example}

\printbibliography
\end{document}